\documentclass[12pt]{amsart} 

\usepackage[english]{babel} 
\usepackage[all]{xy} 
\usepackage{amssymb} 
\usepackage{tikz}  

\numberwithin{equation}{section}

\newtheorem{thm}{Theorem}[section]
\newtheorem{lem}[thm]{Lemma} 
\newtheorem{prop}[thm]{Proposition} 
\newtheorem{cor}[thm]{Corollary} 

\theoremstyle{definition}
\newtheorem{nota}[thm]{Notation} 
\newtheorem{defin}[thm]{Definition}
\newtheorem{rem}[thm]{Remark}
\newtheorem{xamp}[thm]{Example} 

\def\Cor{Cor.\kern.2em} 
\def\Def{Def.\kern.2em} 
\def\Prop{Prop.\kern.2em} 
\def\Th{Th.\kern.2em} 

\def\eg{{e.g.}\kern.3em}
\def\ie{{i.e.}\kern.3em}
\def\resp{resp.\kern.3em}

\def\loccit{\textit{loc.\kern2pt cit.}\kern.25em}  
\def\loccitv{\textit{loc.\kern2pt cit.}} 

\def\calG{\mathcal{G}} 
\def\calR{\mathcal{R}} 

\def\DL{\mathrm{DL}}  
\def\dep{\mathrm{dep}} 
\def\haut{\mathrm{ht}} 
\def\val{\mathrm{val}} 
\def\wt{\mathrm{wt}}  

\let\vep\varepsilon

\def\sea{{\scriptscriptstyle\searrow}}
\def\nea{{\scriptscriptstyle\nearrow}}

\begin{document}

\title[Rooted trees and components of $L(\rho)\otimes L(\rho)$]{On some components of $L(\rho)\otimes L(\rho)$ 
associated with rooted trees for symmetrizable Kac-Moody algebras} 

\author{Rekha Biswal} 

\address{School of Mathematical Sciences, NISER, Bhubaneswar 752 050, Odisha, India,  and Homi Bhabha National Institute, Training School Complex, Anushakti Nagar, Mumbai 400094, India}

\email{rekha@niser.ac.in}

\author{Patrick Polo}

\address{Institut de Mathématiques de Jussieu\\
Sorbonne Université\\
4, place Jussieu -- Boîte Courrier 247\\
75252 Paris Cedex 05\\
France}

\curraddr{CNRS,  IRL 2000 ReLaX\\
Chennai Mathematical Institute
\\ H1 Sipcot IT Park, Siruseri, Kelambakkam 603 103
\\ Tamil Nadu, India} 

\email{patrick.polo@imj-prg.fr, ppolo@cmi.ac.in}

 \thanks{The second-named author is the corresponding author.}

 \thanks{Our collaboration started when the second-named author (P.P.) visited the first at NISER 
Bhubaneswar, during the period May 19 to June 6, 2026, and this paper was completed shortly after the 
first-named author (R.B.) visited the second at CMI, during the period June 8-13, 2026. 
Both authors thank both Institutions for their hospitality and propitious conditions of 
collaborative work.}

\thanks{R.B. is currently supported by the Anusandhan National Research Foundation (ANRF) through the MATRICS Grant ANRF/ARGM/2025/001683/MTR.}

\subjclass{17B67, 20G42, 05E10, 05C05} 

\keywords{Kac-Moody algebras, integrable highest weight modules, crystal bases, rooted trees}

\begin{abstract} 
Let $\mathfrak{g}$ be a symmetrizable Kac-Moody algebra over 
$\mathbb{C}$ and let $L(\rho)$ be the irreducible integrable 
$\mathfrak{g}$-module with highest weight $\rho$. 
Let $I$ be a subgraph of the Dynkin diagram of $\mathfrak{g}$ 
which has only simple bonds and no cycle of length $\geq 3$. 
For every subset $D$ of $I$, denote by $\beta_D$ the sum of 
the simple roots corresponding to $D$. To every $D \subset I$ 
such that $\lambda_{D,I} = 2\rho - \beta_I - \beta_D$ is dominant, 
we associate certain elements $\pi_{D,I}$ of weight 
$\lambda_{D,I} - \rho$ in the crystal $B(\rho)$, which 
depend on the choice of a root vertex in each connected component 
of $I$. Then we prove that our elements are $\rho$-dominant
 elements of $B(\rho)$, hence provide new families of components 
 of the tensor product  $L(\rho)\otimes L(\rho)$. 
\end{abstract} 

\date{\today} 

\maketitle

\section{Introduction} 

Let $\mathfrak{g}$ be a symmetrizable Kac-Moody algebra over $\mathbb{C}$, with Cartan subalgebra $\mathfrak{h}$. 
Let $A = (a_{st})_{s,t\in \Delta}$ be its Cartan matrix, where $\Delta$ is a finite set. We identify $\Delta$ 
with the set of simple roots $\{\alpha_s \}$ and denote the corresponding simple coroots by $\alpha_s^\vee$. 
Let $\leq$ denote the usual partial order on $\mathfrak{h}^*$, defined by $\lambda\geq \mu$ if 
$\lambda-\mu = \sum_{s \in \Delta} m_s \alpha_s$, with $m_s \in \mathbb{N}$. 
Let $P^+$ denote the set of dominant integral weights, \ie 
\begin{equation} 
P^+ = \{\lambda \in \mathfrak{h}^* \mid \langle \lambda, \alpha_s^\vee \rangle \in \mathbb{N}, \; \forall s \in \Delta\}.
\end{equation}
For each $\lambda \in P^+$, there exists a unique (up to isomorphism) integrable $\mathfrak{g}$-module $L(\lambda)$ with 
highest weight $\lambda$, and $L(\lambda)$ is irreducible, see \cite{Ka90}, \Cor 10.4. Further, by \loccitv, \Cor 10.7, 
the category of integrable $\mathfrak{g}$-modules is completely reducible, and for $\lambda, \mu\in P^+$ 
one has 
\begin{equation} 
L(\lambda)\otimes L(\mu) = \bigoplus_{\substack{\nu\in P^+\\[3pt] \nu \leq \lambda+\mu}} L(\nu)^{\oplus c_{\lambda,\mu}^\nu}
\end{equation} 
for some integers $c_{\lambda,\mu}^\nu \geq 0$. Let $\rho \in P^+$ such that 
$\langle \rho, \alpha_s^\vee\rangle = 1$ for all $s \in \Delta$. 

\smallskip Let $I$ be a subset of $\Delta$ such that $a_{ij} \in \{0,-1\}$ for $i\not= j$ in $I$ and such that 
$I$, regarded as a subgraph of the Dynkin diagram $\mathcal{D}$ of $\mathfrak{g}$, has no cycle of length $\geq 3$. 
These hypotheses imply that every connected component of $I$ is a tree. Set $N = \vert I\vert$. 

We introduce the following additional data $\calR$: we choose in each such tree $T$ 
a vertex called the root of $T$; this makes $T$ into  a rooted tree. 
We orient all the edges of $T$ towards the root; then if we have an edge $x \to y$ we say that 
$x$ covers $y$. 

Given this data $\calR$, we consider a numbering $i_1,\dots, i_N$ of the elements of $I$ 
which has the property that $i_k \to i_\ell$ implies $k > \ell$, and we consider the following 
product of lowering operators:  
\begin{equation} 
f_I^\sea (\calR) = f_{i_N} \cdots f_{i_1} 
\end{equation} 
where the symbol $\sea$ indicates that the product is taken in decreasing order of the indices. Further, we number arbitrarily $v_{N+1}, \dots, v_{\vert \Delta\vert}$ 
the elements of $\Delta - I$.

Then, to every subset $D$ of $I$ we associate the integral weight 
\begin{equation} 
\lambda_{D,I} = 2\rho - \sum_{i\in I} \alpha_i - \sum_{d\in D} \alpha_d 
\end{equation} 
and the following product of lowering operators:  
\begin{equation} 
f_D^\nea (\calR) = \prod_{d\in D} f_d 
\end{equation} 
where this time the product is taken in increasing order of the indices (whence the symbol $\nea$). 
The products $f_I^\sea (\calR)$ and $f_D^\nea (\calR)$ depend only on $\calR$ 
(see Remark \ref{rem-prod-R} and Lemma \ref{prod-by-depth}). 

Let $B(\rho)$ denote the crystal of $\mathfrak{g}$ with highest weight $\rho$ and let $\pi_\rho$ be its element of 
weight $\rho$. Recall that a non-zero element $x\in B(\rho)$ is called $\rho$-dominant if 
$\wt(x) + \rho$ is dominant and $\vep_s(x) \leq 1$ for all $s\in \Delta$. 
Our main result is the following theorem,  which is a partial extension of Proposition 2.3 of \cite{BG25}. 

\begin{thm}\label{th-1} Let $D$ be a subset of $I$ such that $\lambda_{D, I}$ is dominant. 
Then the element 
$\pi_{D, I}(\calR) = f_D^\nea (\calR) f_I^\sea (\calR) \pi_\rho$ 
is non-zero, of weight $\lambda_{D,I} - \rho$, and is $\rho$-dominant. 
\end{thm}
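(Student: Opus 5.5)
We compute directly in the crystal $B(\rho)$, tracking at every step the string data $\varepsilon_s$, $\varphi_s$ and using the tensor product rule through the standard embedding of $B(\rho)$ into a tensor product. In practice it is cleaner to use the Kashiwara rule $\varphi_s(x)-\varepsilon_s(x)=\langle \wt(x),\alpha_s^\vee\rangle$ together with the following elementary fact: if $\varepsilon_s(x)=0$ and $\varphi_s(x)>0$, then $f_s x\neq 0$ and $\varepsilon_s(f_s x)=1$. Moreover, for $t\neq s$ with $a_{ts}=0$, applying $f_s$ does not change $\varepsilon_t$. The claim about the weight is immediate once non-vanishing is known, since $f_D^\nea(\calR) f_I^\sea(\calR)$ lowers the weight by $\beta_I+\beta_D$, so $\wt(\pi_{D,I})=\rho-\beta_I-\beta_D=\lambda_{D,I}-\rho$. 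Thus $\wt+\rho=\lambda_{D,I}$ is dominant by hypothesis. Everything therefore reduces to showing $\pi_{D,I}\neq 0$ and $\varepsilon_s(\pi_{D,I})\le 1$ for all $s\in\Delta$.

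First I handle $x_I=f_I^\sea(\calR)\pi_\rho=f_{i_N}\cdots f_{i_1}\pi_\rho$. I argue by induction along the numbering, which applies roots first and then moves up the tree, each vertex being applied after the vertex it covers. Let $x_k=f_{i_k}\cdots f_{i_1}\pi_\rho$. The claim is that $x_k$ lies in the image of a Demazure-type product and has the following properties. $\varepsilon_{i_\ell}(x_k)=1$ if $i_\ell$ (with $\ell\le k$) has no successor among $i_{\ell+1},\dots,i_k$ covering it, and $\varepsilon_{i_\ell}(x_k)=0$ otherwise. We have $\varepsilon_s(x_k)=0$ for every other $s$. Finally $\varphi_{i_{k+1}}(x_k)=1+\#\{\ell\le k: i_\ell \text{ adjacent to } i_{k+1}\}$, and here exactly one neighbour is already present, namely the vertex $i_{k+1}$ covers, unless $i_{k+1}$ is a root. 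Since $i_{k+1}$ has not yet been applied, $\varepsilon_{i_{k+1}}(x_k)=0$, so $f_{i_{k+1}}x_k\neq0$. The subtle point is how $\varepsilon_y$ drops to $0$ for the vertex $y$ covered by $i_{k+1}$. I would verify this using a reduced-word or Littelmann-path model in which the element is an LS path / Lakshmibai–Seshadri chain: products of distinct simple reflections along a tree (acyclic, simply laced) are reduced and $x_I$ is the extremal element $w\pi_\rho$ with $w=s_{i_N}\cdots s_{i_1}$. This makes the $\varepsilon$-values computable via $\langle w\rho,\alpha_s^\vee\rangle$. Indeed, for an extremal element, $\varepsilon_s=\max(0,-\langle w\rho,\alpha_s^\vee\rangle)$, and $w\rho=\rho-\beta_I$ whenever each vertex is applied with $\varphi=1$. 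That in turn holds because at each step exactly one earlier neighbour exists (the tree structure, no cycles, simple bonds), giving $\langle \cdot,\alpha_{i_{k+1}}^\vee\rangle=1$. Hence $x_I=w\pi_\rho$ has weight $\rho-\beta_I$, and $\varepsilon_s(x_I)=\max(0,-\langle\rho-\beta_I,\alpha_s^\vee\rangle)\le 1$ because $\langle\rho-\beta_I,\alpha_s^\vee\rangle=1-2+\deg_I(s)\ge -1$ for $s\in I$, and $\ge 1$ for $s\notin I$.

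Next I apply $f_D^\nea(\calR)$ in increasing order, that is, in the order opposite to how $I$ was built, starting from leaves upward. Write $d_1<\dots<d_m$ and $y_j=f_{d_j}\cdots f_{d_1}x_I$. For $d\in D$, dominance of $\lambda_{D,I}$ gives $\langle \rho-\beta_I-\beta_D,\alpha_d^\vee\rangle\ge -1$. The key computation is $\varphi_{d_j}(y_{j-1})\ge 1$ and $\varepsilon_{d_j}(y_{j-1})\le 1$, which yields non-vanishing. After the step, I must bound $\varepsilon_s(y_j)\le 1$ for all $s$: for $s\notin I$ nothing changes, and for $s\in I$ not adjacent to or equal to $d_j$ also nothing changes. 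For $s=d_j$ one gets $\varepsilon$ at most $\varepsilon_{d_j}(y_{j-1})+1$. The increasing order is chosen precisely so that, at the time $f_{d_j}$ acts, $\varepsilon_{d_j}(y_{j-1})=0$: the element whose $f_{d_j}$-action in $x_I$ was "absorbed" by the later vertex covering it makes the signature rule push the new $f_{d_j}$ onto a fresh factor.

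The main obstacle is this last signature-rule bookkeeping. It amounts to controlling $\varepsilon_{d}$ and $\varepsilon_{t}$ for the neighbours $t$ of $d$ after the non-extremal steps, where the extremal-weight shortcut no longer applies. I would first try to realize everything inside $B(\rho)\hookrightarrow B(\rho_I)\otimes B(\rho-\rho_I)$-style decompositions, or more concretely via the path model with the straight-line path $\pi_\rho$, as suggested by the notation. In that model $f_s$ acts by reflecting the final segment after the last minimum of $h_s$, and the values of $h_s$ are explicitly piecewise linear with minima in $\{0,-1\}$ for our paths. I would then do an induction on $|D|$, using the condition $\langle\lambda_{D,I},\alpha_t^\vee\rangle\ge0$ to show that no neighbour $t$ acquires $\varepsilon_t=2$. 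Following \cite{BG25}, Prop.~2.3, for the type $A$ chain case would guide the local analysis.
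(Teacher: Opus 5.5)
Your first reduction is fine. Once $\pi_{D,I}\neq 0$, its weight is $\lambda_{D,I}-\rho$, so what remains is non-vanishing and $\vep_s(\pi_{D,I})\le 1$. Non-vanishing of $x_I=f_I^\sea(\calR)\pi_\rho$ is also fine.

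\textbf{The extremal shortcut for $x_I$ is wrong.} A non-root vertex $v$ is lowered when exactly one neighbour, $v^-$, has already been lowered. So $\langle\wt(x_k),\alpha_v^\vee\rangle=2$, not $1$, as your own formula for $\varphi_{i_{k+1}}(x_k)$ says. One application of $f_v$ therefore stops in the middle of the $v$-string, and $x_I\neq w\pi_\rho$ in general. For instance, in type $A_2$ with $I=\Delta$ rooted at $1$, $f_2f_1\pi_\rho$ has weight $0$, while $s_2s_1\rho=-\alpha_2$.

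Consequently $\vep_s(x_I)=\max(0,1-\val_I(s))$ is false. From Lemma \ref{lem-pi-I}, $\pi_I=\bigotimes_{i\in I}f_i\pi_i\otimes\bigotimes_{j\notin I}\pi_j$, which gives $\vep_s(\pi_I)=\max(0,1-c_I(s))$ for $s\in I$. So every leaf of $I$ has $\vep_s=1$. This agrees with your first, direct description, which contradicts your second one.

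\textbf{The order is reversed.} Your formula $y_j=f_{d_j}\cdots f_{d_1}x_I$ with $d_1<\cdots<d_m$ lets the smallest index act first, i.e.\ it computes $f_D^\sea x_I$. In $f_D^\nea$ the largest index acts first. This matters: for $I=D=\{1,2\}$ in $A_2$ rooted at $1$, $\varphi_1(f_2f_1\pi_\rho)=0$, so your $y_1=0$.

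\textbf{The essential gap is the second stage, which is the whole content of the theorem.} You assert $\varphi_{d_j}\ge 1$ without proof and defer the bound on $\vep$ as ``the main obstacle''. Moreover, the mechanism you propose cannot work even with the correct order.

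Take $d\in D$ a leaf of $I$, i.e.\ a dead leaf with $c_I(d)=0$. Then $\vep_d(\pi_I)=1$. Every $f_{d'}$ acting before $f_d$ has $d'>d$, so it is not adjacent to $d$, whose only neighbour $d^-$ is smaller. Hence $\vep_d=1$, not $0$, when $f_d$ acts, and $\vep_d=2$ right after. In the $A_2$ example, $f_2^2f_1\pi_\rho$ has $\vep_2=2$. Only the final $f_1$ brings it back to $1$ in $\pi_{D,I}=f_1f_2^2f_1\pi_\rho$, the lowest element. So no step-by-step invariant ``$\vep_s\le 1$'' is available; you need a global description of the final element.

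\textbf{What the paper does instead.} Proposition \ref{prop-1} computes $\pi_{D,I}$ factor by factor in $B_1\otimes\cdots\otimes B_{\vert\Delta\vert}$. It proceeds by induction on depth in $D$, using Lemma \ref{prod-by-depth} and the signature rule of Lemma \ref{lem-BS}(a). Each $f_d$ with $d$ a dead leaf lands on the factor of $d^-$. The operators along each gradient line $L_r$ accumulate on the factor of its end-point $m_r$. This gives non-vanishing.

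Proposition \ref{prop-2} then bounds $\vep$:
\begin{itemize}
\item For $s\notin D$, it uses weights: $\wt(\pi_{D,I})+2\alpha_s\not\le\rho$.
\item For $d\in D$, it uses \cite{BG25}, Prop.~2.3, to locate the at most two factors with $\vep_d(y_k)=1$. It then checks case by case, via Lemma \ref{lem-BS}(b), that $\Sigma'_d\le 0$ at the smaller of the two.
\end{itemize}
Your plan contains no substitute for this explicit description.
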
 

Since the $\rho$-dominant elements of $B(\rho)$ are in bijection with the components of the tensor product 
$L(\rho) \otimes L(\rho)$ (see \cite{Ka95}, \Prop 4.2 or \cite{Ja96}, \Prop 9.27), this gives the following corollary. 

\begin{cor}\label{cor-1} Let $D$ be a subset of $I$ such that $\lambda_{D, I}$ is dominant. 
Then $L(\lambda_{D,I})$ occurs with multiplicity $\geq 1$ in $L(\rho)\otimes L(\rho)$. More precisely, 
a lower bound for this multiplicity is the number 
$n(D,I)$ of distinct elements $\pi_{D, I}(\calR)$ obtained as $\calR$ ranges over all possible choices 
of root vertices. 
\end{cor}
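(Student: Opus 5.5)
The corollary follows directly from Theorem \ref{th-1} and the description of $L(\rho)\otimes L(\rho)$ via crystals, so the plan is to make that link explicit and then count. The basic input is Kashiwara's decomposition (\cite{Ka95}, \Prop 4.2, or \cite{Ja96}, \Prop 9.27). In the crystal $B(\rho)\otimes B(\rho)$, the highest weight elements are exactly the $\pi_\rho\otimes x$ with $x\in B(\rho)$ $\rho$-dominant. Such an element has weight $\rho+\wt(x)$. Since the category of integrable modules is completely reducible and crystal bases are compatible with direct sums, we get
\[
c_{\rho,\rho}^{\nu} = \#\{x\in B(\rho) \mid x \text{ is } \rho\text{-dominant and } \rho+\wt(x)=\nu\}
\]
for every $\nu\in P^+$. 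I will state this identity first, with the convention on the tensor product rule chosen so that $\vep_s(x)\le \langle\rho,\alpha_s^\vee\rangle=1$ is exactly the highest weight condition on $\pi_\rho\otimes x$.

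Next, fix $D\subset I$ with $\lambda_{D,I}$ dominant, and fix a choice $\calR$ of root vertices. Theorem \ref{th-1} says that $\pi_{D,I}(\calR)$ is non-zero, has weight $\lambda_{D,I}-\rho$, and is $\rho$-dominant. Plugging $\nu=\lambda_{D,I}$ into the identity above, $\pi_{D,I}(\calR)$ lies in the counted set. So $c^{\lambda_{D,I}}_{\rho,\rho}\ge 1$, which is the first assertion.

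For the refined bound, let $\calR$ range over the finitely many choices: one root per connected component of $I$. The map $\calR\mapsto \pi_{D,I}(\calR)$ takes values in the counted set, so its image has cardinality $n(D,I)$. Distinct elements $x$ give distinct highest weight elements $\pi_\rho\otimes x$, hence distinct connected components of $B(\rho)\otimes B(\rho)$. These correspond to linearly independent copies of $L(\lambda_{D,I})$ in the decomposition, so $c^{\lambda_{D,I}}_{\rho,\rho}\ge n(D,I)$.

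There is no real obstacle here. The only point needing care is the identification of components of the crystal with isotypic multiplicities for a general symmetrizable Kac-Moody algebra. This is precisely the cited result of Kashiwara and Jantzen, stated for integrable highest weight modules in that generality, so I will simply invoke it. I will not claim anything about whether the bound $n(D,I)$ is sharp.
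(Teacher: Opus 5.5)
Your proposal is correct and follows the paper's route: the corollary is deduced from Theorem \ref{th-1} together with Kashiwara's bijection (\cite{Ka95}, \Prop 4.2, or \cite{Ja96}, \Prop 9.27) between $\rho$-dominant elements of $B(\rho)$ and components of $L(\rho)\otimes L(\rho)$. Under that bijection, distinct elements $\pi_{D,I}(\calR)$ of weight $\lambda_{D,I}-\rho$ give distinct copies of $L(\lambda_{D,I})$. You were right to fix the tensor product convention explicitly: with the convention of \cite{BS17} used in the paper, the highest weight elements are of the form $x\otimes\pi_\rho$ rather than $\pi_\rho\otimes x$, but this does not affect the count.
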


We do not know how  to give in general an explicit lower bound for $n(D,I)$. However we worked out several  examples in the last section of the paper.

Our proof combines crystal theory with combinatorics of rooted trees and proceeds in two steps. 
In more details, for $k = 1,\dots, \vert \Delta\vert$, let $B_k$ be the crystal with 
highest weight the fundamental weight $\omega_k$ and let $\pi_k$ denote the element of $B_k$ of weight 
$\omega_k$. We consider $B(\rho)$ as the connected component of 
the crystal 
$B_1\otimes \cdots \otimes B_{\vert \Delta\vert}$ containing the element $\pi_\rho = 
 \pi_1 \otimes \cdots \otimes \pi_{\vert \Delta\vert}$. 

Firstly, in Proposition \ref{prop-1}, 
we describe explicitly,  in terms of some decomposition of $D$ into 
``\emph{stems with dead leaves}'', the element $\pi_{D, I}$, showing in particular that it 
 is non-zero. 
 
Secondly, in Proposition \ref{prop-2}, we prove, 
using the previous description and \Prop 2.3 of \cite{BG25}, 
 that $\vep_s(\pi_{D, I}) \leq 1$ for all $s\in \Delta$.

\section{Preliminaries} 

\subsection{Tensor product of crystals} 
For the tensor product of crystals, we follow the convention of \cite{BS17}, \S 2.3 (which is opposite to that in 
Kashiwara's original papers). In particular, we will use Lemma 2.33 of \loccit (see also \cite{KN94}, \Prop 2.1.1 or 
\cite{Ka93}, Lemma 1.3.6), 
which we recall below for the 
convenience of the reader. Firstly, let us introduce the relevant notation. 

\begin{nota} (a) If $x$ is an element of a crystal $B$, then for each $s\in \Delta$ we set $\wt_s(x) = 
 \langle \wt(x), \alpha_s^\vee \rangle$. 
 
\smallskip (b) Let $x_j \in B_j$, for crystals $B_1, \dots, B_n$. 
For all $s\in \Delta$ and $j = 1,\dots,n$, we set: 
\begin{equation} 
\Sigma_s(x,j) = \sum_{h=1}^{j-1} \wt_s(x_h), \qquad \quad 
\Sigma'_s(x,j) = -\sum_{h=j+1}^{n} \wt_s(x_h). 
\end{equation} 
\end{nota} 

Then Lemma 2.33 of  \cite{BS17} is the  following: 

\begin{lem}\label{lem-BS} 
Let $x_j \in B_j$, for crystals $B_1, \dots, B_n$. Consider the element $x = x_1 \otimes \cdots \otimes x_n$ of 
$B_1 \otimes \cdots \otimes B_n$. Then, for all $s\in \Delta$, 

\smallskip {\rm (a)} $\varphi_s(x)$ is given by 
$\varphi_s(x) = \max_{1\leq j\leq n} \Big( \varphi_s(x_j) + \Sigma_s(x,j) \Big)$ 
and if  $j$ is the first index where the maximum is attained, then
$$
f_s(x)
=
x_1 \otimes \cdots \otimes f_s(x_j)\otimes \cdots \otimes x_n.
$$

{\rm (b)} $\vep_s(x)$ is given by 
$\vep_s(x) = \max_{1\leq j\leq n} \Big( \vep_s(x_j) + \Sigma'_s(x,j) \Big)$
and if  $j$  is the last index where the maximum is attained, then
$$
e_s(x)
=
x_1 \otimes \cdots \otimes e_s(x_j)\otimes \cdots \otimes x_n.
$$
\end{lem}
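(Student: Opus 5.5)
The plan is to argue by induction on $n$, starting from the two-factor tensor product rule in the convention of \cite{BS17}, \S 2.3, together with associativity of the tensor product of crystals. For $n=1$ there is nothing to prove. For $n=2$, the defining formulas in that convention are
$$\varphi_s(x_1\otimes x_2)=\max\big(\varphi_s(x_1),\,\varphi_s(x_2)+\wt_s(x_1)\big),$$
$$\vep_s(x_1\otimes x_2)=\max\big(\vep_s(x_2),\,\vep_s(x_1)-\wt_s(x_2)\big).$$
The operator $f_s$ acts on $x_1$ exactly when $\varphi_s(x_1)\geq \varphi_s(x_2)+\wt_s(x_1)$. Using $\wt_s=\varphi_s-\vep_s$, this is equivalent to $\vep_s(x_1)\geq\varphi_s(x_2)$. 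Similarly, $e_s$ acts on $x_2$ exactly when $\vep_s(x_2)\geq \vep_s(x_1)-\wt_s(x_2)$. Since $\Sigma_s(x,1)=0$, $\Sigma_s(x,2)=\wt_s(x_1)$, $\Sigma'_s(x,1)=-\wt_s(x_2)$ and $\Sigma'_s(x,2)=0$, these are exactly statements (a) and (b) for $n=2$, including the tie-breaking ("first index" for $f_s$, "last index" for $e_s$). I will check this translation carefully against the convention of \loccitv, since it fixes the direction of the ties.

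For (a) with $n\geq 3$, write $x=x'\otimes x_n$ with $x'=x_1\otimes\cdots\otimes x_{n-1}$. Applying the case $n=2$ gives
$$\varphi_s(x)=\max\big(\varphi_s(x'),\,\varphi_s(x_n)+\wt_s(x')\big).$$
By induction, $\varphi_s(x')=\max_{j\leq n-1}\big(\varphi_s(x_j)+\Sigma_s(x',j)\big)$. For $j\leq n-1$ we have $\Sigma_s(x',j)=\Sigma_s(x,j)$, and $\wt_s(x')=\sum_{h<n}\wt_s(x_h)=\Sigma_s(x,n)$. Substituting gives the formula for $\varphi_s(x)$.

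For the action of $f_s$ there are two cases.
\begin{itemize}
\item If the maximum over $j\leq n-1$ is at least the $n$-th term, then $f_s$ acts on $x'$. By induction it acts on the factor $x_j$ with $j$ the first index attaining $\varphi_s(x')$, and this $j$ is also the first index attaining the global maximum.
\item Otherwise the $n$-th term is strictly larger than all the others, so $n$ is the first (indeed the only) index attaining the maximum, and $f_s$ acts on $x_n$.
\end{itemize}

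For (b) I will split on the other side, $x=x_1\otimes x''$ with $x''=x_2\otimes\cdots\otimes x_n$. Then
$$\vep_s(x)=\max\big(\vep_s(x''),\,\vep_s(x_1)-\wt_s(x'')\big).$$
Here $-\wt_s(x'')=\Sigma'_s(x,1)$, and $\Sigma'_s(x'',j)=\Sigma'_s(x,j)$ for $j\geq 2$. The case analysis mirrors the one for (a).
\begin{itemize}
\item If the maximum over $j\geq 2$ is at least the first term, then $e_s$ acts on $x''$, at the last index attaining its maximum, which is also the last index attaining the global maximum.
\item Otherwise index $1$ is the unique maximizer, and $e_s$ acts on $x_1$.
\end{itemize}

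Associativity is what lets me choose the most convenient bracketing in each part. The computations are routine; the only point needing care is that the tie conventions match at each step. This is where the choice of splitting matters: splitting off $x_n$ for $f_s$ and $x_1$ for $e_s$ makes the inherited tie-breaking ("first" or "last") compose correctly. This consistency is the step I expect to be the main obstacle.
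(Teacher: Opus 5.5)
Your argument is correct. The paper gives no proof of this lemma. It simply recalls it as Lemma 2.33 of \cite{BS17} (see also \cite{KN94}, \Prop 2.1.1, and \cite{Ka93}, Lemma 1.3.6). Your induction on $n$ from the two-factor rule is therefore a self-contained replacement for that citation, and it is the standard argument behind those references.

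Your translation of the case $n=2$ is right. That case of the lemma is equivalent to two rules: $f_s$ acts on $x_1$ iff $\vep_s(x_1)\geq\varphi_s(x_2)$, and $e_s$ acts on $x_2$ iff $\varphi_s(x_2)\geq\vep_s(x_1)$. This is Kashiwara's rule with the two factors swapped, i.e. the convention of \cite{BS17}. Your proof also makes explicit that the only further inputs are the axiom $\varphi_s=\vep_s+\wt_s$, additivity of $\wt$, and associativity.

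One small correction to your last paragraph: the choice of bracketing is not what makes the tie-breaking work, and either splitting serves for either part. For (a) with $x=x_1\otimes x''$, the induction hypothesis gives $\varphi_s(x'')+\wt_s(x_1)=\max_{j\geq 2}\Phi_s(x,j)$; adding the constant $\wt_s(x_1)$ does not move the first maximizer. Then $f_s$ acts on $x_1$ exactly when $\Phi_s(x,1)\geq\max_{j\geq 2}\Phi_s(x,j)$, i.e. when $1$ is the first maximizer. The same works symmetrically for (b) with $x=x'\otimes x_n$: $e_s$ acts on $x_n$ exactly when $\Upsilon_s(x,n)\geq\max_{j\leq n-1}\Upsilon_s(x,j)$, i.e. when $n$ is the last maximizer. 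This is as it must be, since by associativity the answer cannot depend on the bracketing. So the step you flagged as the main obstacle is in fact routine.
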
 

\begin{nota} For the sake of brevity, we will denote $\varphi_s(x_j) + \Sigma_s(x,j)$ by $\Phi_s(x,j)$ and 
$\vep_s(x_j) + \Sigma'_s(x,j)$ by $\Upsilon_s(x,j)$. 
\end{nota}

\subsection{Recollections about trees} Let us recall a few facts about trees and rooted trees. 

\begin{nota} If $\Gamma$ is a  graph (unoriented) and $v$ a vertex of $\Gamma$, the \emph{valency}  of $v$, 
denoted by $\val_\Gamma(v)$ or simply $\val(v)$, is the number of its neighbours. 
\end{nota} 

\begin{nota} Let $T$ be a tree. If we choose, once for all, a vertex $v_0$ of $T$ called the \emph{root}, and 
 orient all the edges towards the root, then $T$ becomes a \emph{rooted tree}. 
For each vertex $v$, let 
$\calG_v$ be the unique geodesic joining $v$ to $v_0$; then 
$\calG_v$ has the  form 
$$
\xymatrix@1{ v= v_h \ar[r] & v_{h-1} \ar[r] & \cdots \ar[r] & v_0}
$$
for some $h$ and we define the \emph{height} of $v$ as $\haut(v) = h = \# \calG_v - 1$. 
We set $\haut(T) = \max_v \haut(v)$. 

\smallskip If $v\not= v_0$, 
\ie if $h\geq 1$, we denote the vertex $v_{h-1}$ by $v^-$ and call it the \emph{predecessor} of $v$. 
We will also write $x\to y$ to mean that $y = x^-$, and say in this case that $x$ \emph{covers} $y$. 

\smallskip The vertices $v$ such that $v\not= v_0$ and $\val(v) = 1$ are called the \emph{leaves} of $T$. 
For any vertex $v$, we define its \emph{depth}, denoted by $\dep_T(v)$ or simply $\dep(v)$, as the maximum 
cardinality of a geodesic from a leaf $\ell$ of $T$ to $v$. 
Thus, in particular, $\dep(v) = 0$ if and only if $v$ is a leaf, and one has $\dep(v_0) = \haut(T)$. 

\smallskip The previous notions extend immediately to the case of a forest of rooted trees: if $I$ is such a forest, 
and $v$ is a vertex of $I$, one has $\haut_I(v) = \haut_T(v)$ and $\dep_I(v) = \dep_T(v)$, where $T$ is the connected component of $I$ containing $v$. 
\end{nota} 

\subsection{A certain numbering of $\mathcal{D}$}\label{compat-order} 
Let us come back to the Dynkin diagram $\mathcal{D}$ of $\mathfrak{g}$. 
Let $I$ be a subset of $\Delta$ such that $a_{ij} \in \{0,-1\}$ for  $i\not= j$ in $I$ and such that 
$I$, regarded as a subgraph of $\mathcal{D}$, has no cycle of length $\geq 3$. 
These hypotheses imply that every connected component of $I$ is a tree. 

\begin{defin}\label{def-order} 
In each connected component $T$ of $I$, we choose a vertex as the root of $T$. This makes $I$ 
into a forest of rooted trees; in particular, $I$ has a height function $\haut_I$. 
We fix then a numbering  $v_1,\dots, v_N$ of the vertices of $I$, where $N = \vert I \vert$, which satisfies the following two conditions: 
\begin{enumerate} 
\item It is compatible with the height function, \ie $\haut_I(v_j) > \haut_I(v_i)$ implies $j > i$. 

\item For each given height, the elements of $D$ come last, \ie if $v_j \in D$ and $v_i \in I - D$ 
have the same height $h$, then $i < j$.
\end{enumerate} 
This can be obtained by numbering $v_1,\dots, v_d$ the elements of $I - D$ of height $0$ (if any), 
then $v_{d+1},\dots, v_n$ the elements of $D $ of height $0$ (if any), then 
$v_{n+1},\dots, v_{n+r}$ the elements of $T - D$ of height $1$ (if any), and so on. 

Finally, we number arbitrarily $v_{N+1}, \dots, v_{\vert \Delta\vert}$ the elements of 
$\Delta - I$. 
\end{defin}

\begin{defin}\label{def-pi} (a) For $k = 1,\dots, \vert \Delta\vert$, let $B_k$ be the crystal with 
highest weight the fundamental weight $\omega_k$ and let $\pi_k$ denote the element of $B_k$ of weight 
$\omega_k$. 

\smallskip (b) 
Let $\pi =  \pi_1 \otimes \cdots \otimes \pi_{\vert \Delta\vert}$; this is the element of highest weight $\rho$ in the crystal 
$B_1\otimes \cdots \otimes B_{\vert \Delta\vert}$ and it belongs to the subcrystal $B(\rho)$. 

\smallskip (c) Let 
$\pi_I = f_I^\sea \, \pi = \Big(\prod_{i\in I} f_i \Big) \pi$, 
where the product is taken in \emph{decreasing} order of the indices, \eg if $I = \{1,3,4\}$ then 
$\pi_I = f_4 f_3 f_1 \pi$. (This explains the notation $f_I^\sea$.) 
\end{defin} 

\begin{lem}\label{lem-pi-I} 
One has $\pi_I = \Big( \bigotimes_{i=1}^N f_i \pi_i \Big) \otimes \Big( \bigotimes_{j > N} \pi_j \Big)$, recalling that 
$N = \vert I \vert$. 
\end{lem}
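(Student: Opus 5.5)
The plan is to prove the formula by induction on $k = 0,1,\dots,N$. We show that after applying $f_{v_1}$, then $f_{v_2}$, \dots, then $f_{v_k}$ to $\pi$ (write $f_k$ for $f_{v_k}$), one obtains
$$
x^{(k)} = f_1\pi_1\otimes\cdots\otimes f_k\pi_k\otimes \pi_{k+1}\otimes\cdots\otimes\pi_{\vert\Delta\vert}.
$$
The case $k=0$ is $x^{(0)} = \pi$, and the case $k=N$ is the statement, since $f_I^\sea$ applies $f_1$ first and $f_N$ last. For the inductive step, we apply Lemma \ref{lem-BS}(a) with $s = k$ to $x = x^{(k-1)}$. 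We must show that the first index $j$ where $\Phi_k(x,j)$ is maximal is $j=k$. Once this holds, $f_k x^{(k-1)} = x^{(k)}$, and this is non-zero because $f_k\pi_k\neq 0$ (indeed $\varphi_k(\pi_k) = 1$).

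The first step is to record the local data in the fundamental crystals. For $\pi_j \in B_j$ we have $\wt_s(\pi_j) = \delta_{sj}$ and $\varphi_s(\pi_j) = \delta_{sj}$. For $f_j\pi_j$, of weight $\omega_j-\alpha_j$, we have $\wt_k(f_j\pi_j) = -a_{kj}$ when $k\neq j$. Since $v_j, v_k \in I$, this quantity lies in $\{0,1\}$, and it equals $1$ exactly when $v_j$ and $v_k$ are adjacent in $I$. Moreover $\vep_k(f_j\pi_j)=0$ for $k\neq j$, because $\omega_j-\alpha_j+\alpha_k\not\leq\omega_j$ is not a weight of $B_j$. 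Hence $\varphi_k(f_j\pi_j) = \vep_k + \wt_k = -a_{kj}$.

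Now let $n_{<m}$ denote the number of neighbours of $v_k$ in $I$ among $v_1,\dots,v_{m-1}$. We compute $\Phi_k(x,j)$ in three cases.
\begin{itemize}
\item For $j<k$: $\Sigma_k(x,j) = n_{<j}$ and $\varphi_k(x_j) = -a_{kj}$. Hence $\Phi_k(x,j) = n_{<j+1} \leq n_{<k}$.
\item For $j=k$: $\Sigma_k(x,k) = n_{<k}$ and $\varphi_k(\pi_k) = 1$. Hence $\Phi_k(x,k) = n_{<k}+1$.
\item For $j>k$: $\varphi_k(\pi_j) = 0$. Also $\Sigma_k(x,j) = n_{<k} + \wt_k(\pi_k) + \sum_{k<h<j}\wt_k(\pi_h) = n_{<k}+1$. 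Hence $\Phi_k(x,j) = n_{<k}+1$.
\end{itemize}
So the maximum $n_{<k}+1$ is attained for the first time at $j=k$, which completes the induction.

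The only delicate point is the computation of $\vep_k(f_j\pi_j)=0$ and $\varphi_k(f_j\pi_j)=-a_{kj}$ in $B_j$. This relies on the hypothesis $a_{ij}\in\{0,-1\}$ in $I$ and on the weight argument above. Note that the ordering conditions of Definition \ref{def-order} are not actually needed for this lemma: any numbering of $I$ would work. Those conditions will only matter later, when the operators $f_D^\nea$ are applied.
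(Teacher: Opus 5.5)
Your proof is correct and follows the paper's route: induction that applies one lowering operator at a time, using Lemma \ref{lem-BS}(a) to show that the first maximum of $\Phi_k$ occurs at position $k$. The only difference is in the bookkeeping. The paper inducts by removing the largest vertex $N$ and uses the compatibility of the numbering with the rooted structure, so that $N^-$ is the only smaller neighbour of $N$ and $\Phi_N$ is the step function $0,1,2$. Your count $n_{<j}$ allows any number of earlier neighbours, so it shows, correctly, that this lemma holds for every numbering of $I$, and your weight argument for $\vep_k(f_j\pi_j)=0$ fills in a detail the paper leaves implicit.
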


\begin{proof} By induction on $\vert I\vert$. There is nothing to prove if $I = \varnothing$, so we may assume 
$I \not= \varnothing$
and the result proved for $I' = I - \{N\}$, where $N$ is the largest element of $I$. 
Set $X' = \pi_{I'}$. We want to compute $f_N X'$. 

\smallskip 
By the inductive hypothesis, one has $x_j = \pi_j$ if $j\geq N$ and $x_j = f_j \pi_j$ if $j < N$. 
From this it follows that $\varphi_N(x_j) = \delta_{N,j} = \wt_N(x_j) $ for $j\geq N$, while for $j < N$ one has: 
$\varphi_N(x_j) = 1 = \wt_N(x_j)$ if $j = N^-$ and $\varphi_N(x_j) = 0 = \wt_N(x_j)$ else. 

Thus, if $N$ has a predecessor $N^-$ one has: 
$$
\Phi_N(x,j) = 
\begin{cases} 
0 & \text{if } j < N^-, 
\\
1 & \text{if } N^- \leq j < N, 
\\
2 & \text{if } j \geq  N,
\end{cases}
$$
and otherwise one has $\Phi_N(x,j) = 0$ if $j < N$ and $\Phi_N(x,j) = 1$ if $j\geq N$. 
In both cases, $N$ is the first index where the maximum of $\Phi_N(x,j)$ is attained. Hence, by Lemma \ref{lem-BS} (a), 
one obtains, with obvious notation: 
$$
f_N X' = X'_{\leq N-1} \otimes f_N \pi_N \otimes X'_{> N}.
$$
This proves the lemma.
\end{proof} 

\begin{rem}\label{rem-prod-R} In the proof, we used only the fact that $j\to i$ implies $i < j$. This shows that 
$\pi_I$ depends only on the data $\calR$. 
\end{rem} 

\section{Description of the element $\pi_{D, I}$} 

\subsection{Preliminaries} 

\begin{defin}\label{def-inc} 
Let $f_D^\nea = \prod_{d\in D} f_d$, where now the product is taken in \emph{increasing} order 
of the indices, \eg if $D = \{1,3\}$ then $f_D^\nea = f_1 f_3$.  (This explains the notation $f_D^\nea$.) 

Set $\pi_{D,I} = f_D^\nea \, \pi _I$ and note that, if $\pi_{D, I}$ is non-zero, then $\wt(\pi_{D,I} ) + \rho$ is the weight 
\begin{equation}
\lambda_{D,I} =  2\rho - \sum_{i\in I} \alpha_i - \sum_{d\in D} \alpha_d 
\end{equation} 
considered in the Introduction. Then, regarding $D$ as a subgraph of $I$, one has the following lemma. 
\end{defin} 

\begin{lem}\label{lem-dom} 
$\lambda_{D,I} $ is dominant if and only if for each $d\in D$ one of the following conditions is satisfied: 
\begin{enumerate} 
\item[(a)] $d$ has at least two neighbours in $I$, or 

\smallskip 
\item[(b)] $d$ has a unique neighbour $i$ in $I$, and $i\in D$. 
\end{enumerate} 
\end{lem} 

\begin{proof} One sees immediately that for any simple root $\alpha_j \in \Delta$, one has 
$$
\begin{cases} 
\langle\lambda_{D,I}, \alpha_j^\vee \rangle  \geq 2 & \text{ if } j\not\in I , 
\\
\langle\lambda_{D,I}, \alpha_j^\vee \rangle  \geq 0 & \text{ if } j\in I - D , 
\\
\langle\lambda_{D,I}, \alpha_j^\vee \rangle  = \val_I(j) + \val_D(j) - 2 & \text{ if } j \in D , 
\end{cases} 
$$
where $\val_I(j)$, \resp $\val_D(j)$, denote the number of neighbours of $j$ in $I$, \resp in $D$. 
Thus, $\lambda_{D,I} $ is dominant if and only if for every $d\in D$ one has 
$$
 \val_I(d) + \val_D(d) \geq 2. \leqno (*)
$$
This condition is automatically satisfied if $\val_I(d) \geq 2$. Otherwise, since $\val_D(d) \leq \val_I(d)$, 
one must have $\val_I(d) = 1 = \val_D(d)$, \ie has a unique neighbour $i$ in $I$, and $i\in D$. 
This proves the lemma. 
\end{proof} 

Note that the lemma implies, in particular, that an isolated point of $I$ (\ie a connected component  
of cardinality $1$), cannot belong to $D$. 

\begin{rem}\label{commut-2A1} We shall repeatedly use the following commutativity property: 
for $s\not= t$ in $\Delta$ which are not neighbours in the Dynkin diagram, 
 the action of the lowering operators $f_s$ and $f_t$ on any highest weight crystal commute 
(and similarly for the raising operators $e_s$ and $e_t$). 
This is evident in Kashiwara's definition in terms of quantized enveloping algebras. 

On the other hand, Stembridge has taken this property, together 
with crystal analogues of the Serre relations in the case where $s$ and $t$ are linked by a simple bond, 
to define the class of what is now called Stembridge crystals, and he proved that any highest weight crystal 
$B(\lambda)$ satisfies these properties, see \cite{St03}, \Def 1.1 and \Th 2.4 (see also \cite{BS17}, \S\S 4.2 
and 5.7). 
\end{rem}

Note that $D$, as a (full) subgraph of the forest $I$, is also a forest, \ie its connected components 
$D_1,\dots, D_u$ are trees. Further,  the total order on $I$, compatible with the covering relation $x\to y$, 
induces on each $D_t$ a structure of rooted tree. 
In particular, $D$ has a height $h = \haut(D)$ and a depth function $\dep_D$. 

\begin{defin} Recall the notation $f_D^\nea $ from {\rm \ref{def-inc}}. For each connected component 
$D_t$ of $D$, set $f_{D_t}^\nea = \prod_{d\in D_t} f_d$, 
where the product is taken in increasing order of the indices. Note that $f_d$ and $f_{d'}$ commute if 
$d$ and $d'$ belong to different components, by Remark \ref{commut-2A1}. 
Thus, one obtains that: 
\begin{equation}\label{inc-comp}
f_D^\nea = \prod_{t= 1}^u f_{D_t}^\nea 
\end{equation} 
where the product on the right can be taken in any order. 
\end{defin}

\begin{defin} For $k= 0,\dots, h$, set $D[k] = \{d \in D \mid \dep_D(d) = k\}$ and consider 
the operator
\begin{equation} 
f_{D[k]} = \prod_{d \in D[k]} f_d. 
\end{equation} 
In this product the $f_d$'s are pairwise non-linked, hence they commute (see Remark \ref{commut-2A1}), 
so that the order of the product does not matter. 
\end{defin}

\begin{lem}\label{prod-by-depth}  
One has $f_D^\nea  = f_{D[h]} \cdots f_{D[0]}$. 
\end{lem}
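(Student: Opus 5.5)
We have to show that the ordered product $f_D^\nea=\prod_{d\in D} f_d$ (increasing indices) equals $f_{D[h]}\cdots f_{D[0]}$. In the latter, operators of depth $0$ act first (rightmost), then depth $1$, and so on. By \eqref{inc-comp} and Remark \ref{commut-2A1} we may work one component $D_t$ at a time; the depth function, however, is computed in $D$, i.e.\ within each component. The argument is purely about reordering a word in the operators $f_d$. The only relations used are the commutations $f_d f_{d'} = f_{d'} f_d$ for $d,d'$ not linked in the Dynkin diagram, where non-linked means not adjacent in $I$, since $D\subset I$ is a full subgraph.

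\emph{Step 1 (comparison of depth and index along edges).} Let $d\to d'$ be an edge of $D$, so $d'=d^-$ in the rooted-tree structure of $D_t$ induced by the order on $I$. I will check that this orientation agrees with the one coming from $I$: the neighbour of $d$ closer to the $I$-root has smaller height, hence smaller index by condition (1) of Definition \ref{def-order}. Also $\dep_D(d') \geq \dep_D(d)+1$, since any geodesic from a leaf to $d$ extends through $d'$. Thus along every edge of $D$, the vertex with larger index has strictly smaller depth.

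\emph{Step 2 (reordering).} In the word $f_D^\nea$ the letters appear with increasing index from left to right, so $f_d$ for the vertex of larger index sits to the right and acts first. The target word $f_{D[h]}\cdots f_{D[0]}$ has the letters grouped by depth, with larger depth to the left. Both words are linear extensions of the same partial constraint: for each edge $\{d,d'\}$ of $D$, the letter of smaller depth (equivalently, by Step 1, of larger index) is to the right. Within a fixed depth class no two vertices are adjacent, because adjacent vertices have different depths by Step 1, so the order inside $f_{D[k]}$ is irrelevant.

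Two words in the same letters that induce the same relative order on every pair of non-commuting letters are equal modulo the commutations. This is the standard fact about partially commutative (Cartier--Foata) monoids; it can be proved by induction on the length of the word, by moving the last letter of one word to the end of the other past letters that commute with it. Applying this fact gives the lemma.

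\emph{Main obstacle.} The delicate point is Step 1: one must verify that the orientation of $D$ induced by the total order coincides with the covering relation of $I$ restricted to $D$. This is where condition (1) of Definition \ref{def-order} is needed, and where one uses that $D$ is a full subgraph, so that edges of $D$ are edges of $I$.
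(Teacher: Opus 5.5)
Your proof is correct, but it is organized differently from the paper's.

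\textbf{The paper's route.} It argues by induction on the height $h$. For a rooted tree $D$ with root $d_0$ (the element of smallest index, and the unique element of $D[h]$), it writes $f_D^\nea = f_{d_0}\, f_{D'}^\nea$ with $D' = D - \{d_0\}$. It notes that $\haut(D') = h-1$ and $\dep_{D'} = \dep_D$ on $D'$, applies the induction hypothesis, and then handles a general forest by commuting factors from different components via \eqref{inc-comp}.

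\textbf{Your route.} You do a single global reordering. You isolate the one combinatorial fact that matters: along every edge of $D$, the endpoint of larger index has strictly smaller depth. This also shows that each $D[k]$ is an independent set. You then invoke the Cartier--Foata criterion for words with distinct letters in a partially commutative monoid, and your sketch of that criterion (move the last letter of one word to the end of the other) is right.

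\textbf{What each approach buys.} Your argument proves more: any product of the $f_d$, $d\in D$, in which each $f_d$ stands to the right of $f_{d^-}$ gives the same operator. So the independence of $f_D^\nea$ from the chosen numbering, claimed in the Introduction, comes for free. It also sidesteps the fact that $D-\{d_0\}$ is in general a forest rather than a tree. The paper's inductive step therefore tacitly needs the forest case at height $h-1$, which its final paragraph supplies only afterwards. The paper's argument is more elementary and avoids any appeal to trace-monoid combinatorics. Finally, the compatibility of orientations that you flag as the main obstacle is needed equally by both arguments; the paper takes it for granted when it says the total order induces a rooted tree structure on each $D_t$.
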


\begin{proof} Suppose first that $D$ is a rooted tree, of height $h$, and let us prove the result by induction on $h$. 
The result is true for $h=0$ since the factors of 
$f_{D[0]}$ commute with each other. So we may assume $h\geq 1$. Let $d_0$ be  the root of 
$D$ and set $D' = D -\{d_0\}$. One has $f_D^\nea = f_{d_0} f_{D'}^\nea$ and, since 
$\haut(D') = h-1$ and $\dep_{D'}(d) = \dep_{D}(d)$ for every $d\in D'$, one obtains by the inductive hypothesis 
that 
$$
 f_{D'}^\nea = f_{D'[h-1]} \cdots f_{D'[0]} = f_{D[h-1]} \cdots f_{D[0]} . 
$$
This proves the result in the case of a rooted tree. Thus, in  the general case, the result is true for each connected 
component $D_t$ of $D$, \ie for every $t$ one has 
$$
f_{D_t}^\nea = f_{D_t[h]} \cdots f_{D_t[0]} 
$$
with the convention that $f_{D_t[k]} = 1$ if $k$ is greater than the height of $D_t$. Again, since $f_d$ and $f_{d'}$ commute if $d$ and $d'$ belong to different components, the above equality combined with \eqref{inc-comp} 
gives that 
$$
f_D^\nea = \prod_{k = h}^0 \Big( \prod_{t=1}^u f_{D_t[k]} \Big) = f_{D[h]} \cdots f_{D[0]}. 
$$
This proves the lemma. 
\end{proof}

\subsection{More notation about $D$} 

In order to describe conveniently the element $\pi_{D,I}$, we need to introduce more definitions. 

\begin{defin} For every $d\in D$, let $C_I(d) =  \{i\in I \mid i \to d\}$ 
be the set of elements of $I$ which cover $d$, and let $c_I(d) = \vert C_I(d)\vert$. We also set $C_D(d) = 
D \cap C_I(d)$. 

Note that if $d$ is a leaf of $D$ then either $C_I(d) = \varnothing$ (in which case $d$ is also a leaf of $I$), or all elements of $C_I(d)$ belong to $I - D$. 
\end{defin} 

\begin{defin} Let $d$ be a leaf of $D$. 

\smallskip (a) If $c_I(d) \geq 2$, we say that $d$ is a \emph{vivid leaf}.  

\smallskip 
(b) Otherwise, if $c_I(d)\leq 1$, we say that $d$ is a \emph{dead leaf} of $D$. In this case, by Lemma \ref{lem-dom}, 
$d$ has a predecessor $d^-$ in $I$ and, further, $d^- \in D$ if $c_I(d) = 0$. 

If $d^- \in D$, we will say that $d^-$ is a \emph{receiving node}, and if $d^-\not\in D$ we will say that 
$d^-$ is a \emph{grounding point}. (We shall see in \Prop \ref{prop-1} that in $\pi_{D, I}$ all dead leaves fall, in some sense, 
on their predecessor.)

\smallskip (c) We denote by $\DL$ the set of dead leaves of $D$ and by $\DL^n$, \resp $\DL_g$,  
the subset consisting of those $d$ such that $d^-\in D$, \resp $d^-\not\in D$. 
(Thus, $\DL_g$, \resp $\DL^n$, is the set of dead leaves which fall on the ground, \resp on a receiving node.) 
These two subsets form a partition of $\DL$.  Further, for $i \in I$ we set 
$\DL(i) = \{d \in \DL \mid d \to i\}$. 

\smallskip (d) We set $D^* = D - \DL$ and $D^n = D - \DL_g = D^* \cup \DL^n$. 
\end{defin} 

\begin{lem}\label{lem-def-m} There exists a unique function $\sigma : D^* \to I$ which is defined recursively as follows: 
\begin{enumerate} 
\item If $\dep_D(d) = 0$, \ie if $d$ is a vivid leaf, $\sigma(d)$ is 
the largest element of $C_I(d)$. Then we say that $\sigma(d)$ is an \emph{anchor point}. 

\smallskip 
\item If $\dep_D(d) = 1$ and $C_D(d)$ contains only dead leaves, $\sigma(d)$ is the largest element of 
$C_D(d)$. Then we say that $\sigma(d)$ is an \emph{anchor dead leaf}. 

\smallskip 
\item In all other cases, the set $U(d)$ of $d' \in C_D(d)$ such that $\sigma(d')$ is  defined  is not empty, and the values 
$\sigma(d')$ are pairwise distinct. Then $\sigma(d) = d^+$, where $d^+$ is the unique element of $U(d)$ such that 
$\sigma(d^+)$ is maximal. 
\end{enumerate}
\end{lem}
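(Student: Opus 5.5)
I will prove existence and uniqueness together by strong induction on $\dep_D(d)$ for $d \in D^*$. Simultaneously I will establish an auxiliary property $(P)$: for each $d\in D^*$, $\sigma(d)$ lies in the subtree of $I$ consisting of the vertices above $d$ (those whose geodesic to the root passes through $d$), and $\sigma(d)\neq d$. More precisely, $\sigma(d)$ belongs to the branch of some $d'\in C_I(d)$, namely the set of vertices $v$ with $d'\in\calG_v$.

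\textbf{Case $\dep_D(d)=0$.} Here $d$ is a leaf of $D$ lying in $D^*$, hence not dead, so $c_I(d)\geq 2$. Thus $C_I(d)\neq\varnothing$ and its largest element (for the numbering of Definition~\ref{def-order}) is well defined. It lies in the branch of itself, so $(P)$ holds.

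\textbf{Case $\dep_D(d)\geq 1$.} Here $d$ is not a leaf of $D$, so $C_D(d)\neq\varnothing$. If $\dep_D(d)=1$ and $C_D(d)$ consists only of dead leaves, rule (2) applies and gives a well-defined largest element; $(P)$ is clear.

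Otherwise I must check that $U(d)\neq\varnothing$. Take any $d'\in C_D(d)$ with $d'\notin\DL$; then $d'\in D^*$. Since $d'$ covers $d$ and lies on a geodesic from a leaf of $D$ to $d$, we have $\dep_D(d')<\dep_D(d)$, so $\sigma(d')$ is defined by the induction hypothesis. Such a $d'$ exists in each remaining subcase:
\begin{itemize}
\item If $\dep_D(d)=1$ and not all of $C_D(d)$ are dead leaves, then some $d'\in C_D(d)$ is a non-dead element.
\item If $\dep_D(d)\geq 2$, choose $d'\in C_D(d)$ with $\dep_D(d')=\dep_D(d)-1\geq 1$. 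This $d'$ is not a leaf, hence not a dead leaf.
\end{itemize}

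\textbf{Pairwise distinctness.} If $d'_1\neq d'_2$ are in $U(d)$, then by $(P)$ the values $\sigma(d'_1)$ and $\sigma(d'_2)$ lie in the branches of $d'_1$ and $d'_2$ respectively (a branch of some element covering $d'_k$ lies inside the branch of $d'_k$). Since $I$ is a forest, these branches are disjoint, so the values differ. Hence the maximum over $U(d)$ is attained at a unique $d^+$, and $\sigma(d)=d^+$ is well defined. Property $(P)$ holds for $d$ because $d^+\in C_I(d)$.

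\textbf{Uniqueness.} Each value $\sigma(d)$ is forced by the rules once the values at smaller depth are known, so uniqueness follows by the same induction. The three rules are mutually exclusive by construction: rule (1) exactly when the depth is $0$, rule (2) under its stated hypothesis, and rule (3) otherwise.

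The main point needing care is the distinctness claim. Without tracking $(P)$, two children might produce equal anchor values, so carrying $(P)$ through the induction is the key step. A minor subtlety is that $\dep_D$ is computed in $D$ while the covering relation comes from $I$. On each component of $D$, however, the induced rooted-tree structure agrees with covering in $I$, and this is what guarantees $\dep_D(d')<\dep_D(d)$.
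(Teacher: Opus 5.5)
Your proof is correct and follows essentially the paper's route. The paper observes that each of the three rules gives $\sigma(d)\in C_I(d)$, i.e.\ $\sigma(d)^-=d$, so $\sigma$ is injective and the values $\sigma(d')$, $d'\in U(d)$, are automatically distinct; your invariant $(P)$ with the disjoint-branches argument is a slightly weaker form of that same fact, which you in fact check case by case, and you also spell out the depth-decrease and non-emptiness-of-$U(d)$ checks that the paper leaves implicit.
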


\begin{proof} By design, one has $\sigma(d)^- = d$ at each step, and this ensures that $\sigma$ is injective. Therefore, 
in situation (3), the values $\sigma(d')$, for $d' \in U(d)$, are indeed pairwise distinct, and hence $\sigma$ can be defined 
recursively on the whole of $D^*$. 
\end{proof} 

\begin{defin} For $i\in I$, its \emph{precursors} are the elements of $I$ defined by $i^0 = i$ and 
$i^{k+1} = (i^k)^-$ for $k\in \mathbb{N}$. Thus, the set $\mathcal{P}(i)$ of precursors of $i$ is the stem 
$$
i = i^0 \to i^1 \to \cdots \to i^h
$$
where $i^h$ is the root of the tree containing $i$ and $h$ is the height of $i$. 
\end{defin} 

\begin{defin} (a) For $d\in D^*$, let $t$ be the largest integer such that $\sigma^t(d)$ is defined, 
where $\sigma^t$ is the $t$-fold iterate of $\sigma$. 
The set $\{d,\sigma(d),\dots, \sigma^t(d)\}$ is called the \emph{gradient flow} through $d$. We denote its 
end-point by $\mu(d)$; it is an element of $I$ which is either an anchor point or an anchor dead leaf.

\smallskip 
(b) Let $M$ be the set of those $\mu(d)$'s and let $m_1 < \cdots < m_t$ be the elements of $M$ arranged in increasing order. 
For $r = 1,\dots, t$,  define the \emph{gradient line} 
\begin{equation}
L_r = \{m_r\} \cup \{d \in \mathcal{P}(m_r) \mid \mu(d) = m_r\} 
\end{equation}
and denote by $\ell_r$ the smallest element of $L_r$, \ie $L_r$ is the stem: 
\begin{equation}\label{Lr-stem} 
m_r \to m_r^1 \to \cdots \to m_r^h = \ell_r. 
\end{equation}
Further, it is convenient to extend the function $\mu$ to $D^* \cup M$ by setting $\mu(m) = m$ for $m \in M - D$, 
\ie if $m$ is an anchor point.  

\smallskip (c) For $r = 1,\dots, t$, let $S_r$ be the union of $L_r$ and the dead leaves $d\in \DL^n$ such that 
$d^- \in L_r$. We say that $S_r$ is a ``\emph{stem with dead leaves at the receiving nodes}''. 
\end{defin} 

\begin{lem}\label{lem-stems} 
{\rm (a)} For each $r$, one has $\ell_r = m_r^h$, where $h$ is the largest integer such that
 $\mu(m_r^h) = m_r$, and one has $\mu(m_r^k) > m_r$ for $k > h$ if $m_r^k$ is defined. 

\smallskip {\rm (b)}  $S_1,\dots, S_t$ form a partition of $D^n \cup M$.
\end{lem}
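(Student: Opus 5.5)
The plan rests on one basic observation, recorded in the proof of Lemma \ref{lem-def-m}: $\sigma(d)^- = d$ for every $d\in D^*$. Hence the gradient flow $d,\sigma(d),\dots,\sigma^t(d)=\mu(d)$ climbs away from the root one covering step at a time. Equivalently, $d$ is the $t$-th precursor of $\mu(d)$, so $d\in\mathcal{P}(\mu(d))$ and $\sigma^j(d) = \mu(d)^{t-j}$. This is what makes $L_r$ a piece of the stem $\mathcal{P}(m_r)$.

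\emph{Step 1 (proof of (a), the segment property).} Fix $r$, write $m=m_r$, and suppose $\mu(m^k)=m$ for some $k\geq 1$. By the observation, the flow from $m^k$ is $m^k, m^{k-1},\dots,m^1,m$. Hence each $m^j$ with $1\leq j\leq k$ lies in $D^*$ and satisfies $\sigma^{k-j}(m^k)=m^j$. It follows that $\mu(m^j)=\mu(m^k)=m$. Therefore the set $\{k\mid \mu(m^k)=m\}$, with $k=0$ included, is an initial interval $\{0,\dots,h\}$. So $L_r$ is exactly the stem \eqref{Lr-stem}, and its smallest element is $\ell_r=m^h$, since the numbering is compatible with height and $m^h$ has the smallest height.

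\emph{Step 2 (the inequality $\mu(m^k)>m$ for $k>h$).} This is where I expect the main obstacle. Let $k=h+1$ with $m^k\in D^*$. Then $\sigma(m^k)=d^+\neq m^h$, and $d^+$ was chosen in case (3) of Lemma \ref{lem-def-m} because $\sigma(d^+)>\sigma(m^h)$. I would first try to prove, by induction on $\dep_D$, a comparison lemma: for $d',d''\in U(d)$, one has $\sigma(d')>\sigma(d'')$ if and only if $\mu(d')>\mu(d'')$. The base cases are (1) and (2), where $\mu=\sigma$. The inductive step would use that the numbering of Definition \ref{def-order} is compatible with height. One could also try to compare the heights of the endpoints, using that an element of larger height has a larger index. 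If this comparison works, then $\mu(m^{h+1})=\mu(d^+)>\mu(m^h)=m$. For $k>h+1$, I would argue by induction along the stem. Either $\mu(m^k)=\mu(m^{k-1})$, which is $>m$ by induction; or the flow from $m^k$ switches to another branch $d^+$, and then the comparison lemma gives $\mu(m^k) = \mu(d^+)>\mu(m^{k-1})>m$. If the comparison lemma turns out to be false in general, I would instead try to rephrase the choice in (3) directly as \guillemotleft maximal endpoint\guillemotright, by showing inductively that $\sigma$ and $\mu$ induce the same order on $U(d)$.

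\emph{Step 3 (proof of (b)).} Coverage comes first. Every $d\in D^*$ lies in $L_r$ for the $r$ with $m_r=\mu(d)$, by the observation. Every $m\in M$ lies in its own $L_r$. Every $d\in\DL^n$ has $d^-\in D$, and $d^-$ is not a leaf of $D$ (it is covered by $d\in D$), so $d^-\in D^*$; hence $d^-$ lies in some $L_r$ and $d\in S_r$. For disjointness, the $L_r$ are pairwise disjoint because $d\in L_r$ forces $\mu(d)=m_r$, and the $m_r$ are distinct. A dead leaf $d\in\DL^n$ lies in $S_r$ only through $d^-$, so it belongs to a unique $S_r$. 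Finally, one must check that a dead leaf of $\DL^n$ cannot lie simultaneously in some $L_{r'}$ (as an anchor dead leaf $m_{r'}$) and be attached to a different $L_r$. If $d=m_{r'}$, then $d^-=\sigma^{-1}(d)$ has $\mu(d^-)=d=m_{r'}$. So $d^-\in L_{r'}$, and $d$ is attached only to $S_{r'}$, consistently. This shows that $S_1,\dots,S_t$ partition $D^n\cup M$.
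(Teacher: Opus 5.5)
Your Steps 1 and 3 are correct, and Step 1 is more robust than the paper's argument. The fact $\sigma(x)^-=x$ alone shows that each level set $\mu^{-1}(m_r)\subset D^*$ is an initial segment $m_r^1,\dots,m_r^h$ of $\mathcal{P}(m_r)$. This gives $\ell_r=m_r^h$, and together with your handling of $\DL^n$ (including the anchor dead leaves, which lie both in $M$ and in $\DL^n$) it gives all of (b), whatever rule is used in case (3) of Lemma \ref{lem-def-m}.

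The gap is Step 2: your comparison lemma is false. In Example \ref{xamp-GDM}, $U(0)=\{2,3\}$ with $\sigma(2)=5<6=\sigma(3)$, whereas $\mu(2)=a>8=\mu(3)$. The values $\sigma(d')$, $d'\in U(d)$, all lie at height $\haut(d)+2$, where their relative order is an arbitrary choice. So neither induction on depth nor a height comparison can tie that order to the order of the $\mu(d')$; here $8<a$ is forced only because anchor points precede anchor dead leaves of the same height. With case (3) read literally, $\sigma(0)=3$ and $\mu(0)=8$. For $m_r=a$ one has $\mathcal{P}(a)=(a,5,2,0)$ and $h=2$, yet $\mu(0)=8<a$. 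So the inequality in (a) is false as stated, and no argument can prove it. Your fallback (showing that $\sigma$ and $\mu$ induce the same order on $U(d)$) is the same false claim.

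The paper's proof has exactly the same hole: it asserts that $d\to d'$ in $D^*$ implies $\mu(d)\le\mu(d')$ ``by the definition of $\sigma$''. That holds only if case (3) chooses $d^+\in U(d)$ with $\mu(d^+)$ maximal. These values are pairwise distinct, since they lie above distinct children of $d$. This $\mu$-maximal rule is what the proof of Proposition \ref{prop-1} actually uses: its last step orders the $d_s$ by $\mu$ and concludes $\mu(d)=\mu(d_t)$. In the Example, $\Phi_0$ increases by $1$ from position $8$ to position $a$, so $f_0$ in fact lands at $a$. With that rule, your Step 2 becomes a one-line induction along the stem, but only while $m_r^{h+1},\dots,m_r^k$ remain in $D^*$. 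If the stem leaves $D$ and re-enters it, the inequality fails again, whatever the rule. For example, take $D=\{r,p,d\}$ with root $r$, $p\to r$, $x\to r$, $d\to x$, $x\notin D$, and $p$ and $d$ each covered by two vertices of $I-D$. Then $m_r=\sigma(d)$ has height $3$ and $m_r^3=r$, while $\mu(r)=\sigma(p)$ has height $2$, so $\mu(r)<m_r$. Hence ``if $m_r^k$ is defined'' must be read with that restriction.
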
 

\begin{proof} (a) Note first that all elements of $\mathcal{P}(m_r) - \{m_r\}$ belong to $D^*$. If $d,d'\in D^*$ 
and $d\to d'$ then, by the definition of $\sigma$, one has $\mu(d)\leq \mu(d')$. Assertion (a) follows from this. 

\smallskip (b) By assertion (a), the $L_k$ form a partition of $D^* \cup M$, since there are the level lines of the function $\mu$. Further, each dead leaf $d \in D^n$ fall onto its predecessor $d^-$, which belongs to $D^*$ 
hence to a unique $L_k$. Thus $d$ belongs to $S_k$ and to no other $S_\ell$. This proves (b). 
\end{proof} 

\begin{rem} Finally, the dead leaves $b\in \DL_g$ are the elements $b\in D$ such that $b^- = i$ and 
$C_I(b) = \{j\}$ for some $i,j \in I - D$, \ie one has $j \to b \to i$ and $j$ is the unique element of $I$ which covers $b$. 
Note, firstly,  that there can be several dead leaves $b_1,\dots, b_p\in \DL_g$ sharing the same ground point $i$ and, 
secondly, that $j$ cannot be one of $m_1,\dots, m_t$, but $i$ could be one of them. This leads us to the following definition. 
\end{rem} 

\begin{defin} (a) Denote by $\calG$ the subset of $I$ consisting of the ground points which do not belong to $M$. For $i\in \calG$, set $G_i = \{i\} \cup  \DL(i)$ and call it a $0$-stem of dead leaves. 

\smallskip 
(b) On the contrary, suppose that $m_r$ is both a ground point (for the dead leaves $b_1,\dots, b_p$) and an 
anchor point (for the stem $S_r$). In this case, we set $C_r = S_r \cup \{b_1,\dots, b_p\}$ 
and say that $C_r$ is a \emph{corolla with dead leaves}, see the picture in the example \ref{xamp-GDM} below. 
We set also $\mathcal{C} = \calG \cap M$ and say that the elements of $\mathcal{C}$ are \emph{corolla points}. 
\end{defin} 

Using the previous definition, one deduces easily from Lemma \ref{lem-stems} the following corollary. 

\begin{cor} The $G_i$, for $i\in \calG$, and the corollas or stems with dead leaves $C_s$ or $S_r$ for $s\in \mathcal{C}$ 
or $r\in M -\mathcal{C}$, form a partition of the subset $D \cup M \cup \calG$ of $I$. 
\end{cor}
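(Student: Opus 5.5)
The statement to prove is the last corollary: the sets $G_i$ ($i\in\calG$), $C_s$ ($s\in\mathcal{C}$) and $S_r$ ($r\in M-\mathcal{C}$) partition $D\cup M\cup\calG$. The plan is to start from Lemma \ref{lem-stems}(b), which says that $S_1,\dots,S_t$ partition $D^n\cup M$. It then remains to account for the elements of $\DL_g$ and of $\calG$, and to check that merging the $S_r$ with $m_r\in\mathcal{C}$ into corollas does not create overlaps.

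\textbf{Step 1: decomposition of the ambient set.} Since $D=D^n\sqcup \DL_g$, we have
$$D\cup M\cup\calG=(D^n\cup M)\sqcup \DL_g\sqcup \calG .$$
\emph{Disjointness.} Elements of $\calG$ are ground points, hence lie in $I-D$, and by definition they are not in $M$; so $\calG$ is disjoint from $D^n\cup M$ and from $\DL_g\subset D$. Also $\DL_g\subset D-D^n$. Finally, an element $b\in\DL_g$ cannot lie in $M$: elements of $M$ are either anchor points, which lie in $I-D$, or anchor dead leaves, which lie in $\DL^n$ since their predecessor is in $D$. So the union is indeed disjoint.

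\textbf{Step 2: distributing $\DL_g\sqcup\calG$.} Let $\mathcal{G}'$ be the set of all ground points $\{b^-: b\in\DL_g\}$; note $\calG=\mathcal{G}'-M$. Group $\DL_g$ by ground point: $\DL_g=\bigsqcup_{i\in\mathcal{G}'}\bigl(\DL(i)\cap\DL_g\bigr)$. For $i\in\mathcal{G}'$, every $d\in\DL(i)$ lies in $\DL_g$, because $d^-=i\notin D$. So this grouping is exactly by the sets $\DL(i)$.

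For $i\in\calG$, the block $\DL(i)$ together with $i$ gives $G_i$. For $i\in\mathcal{G}'\cap M$ (the corolla points, which is how $\mathcal{C}=\calG\cap M$ must be read), we have $i=m_r$ for a unique $r$, and $\DL(i)=\{b_1,\dots,b_p\}$ is exactly what is adjoined to $S_r$ to form $C_r$.

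\textbf{Step 3: assembling the partition.} Since the $\DL(i)$ are pairwise disjoint, since the $S_r$ partition $D^n\cup M$ by Lemma \ref{lem-stems}(b), and since the added pieces lie in the complementary set $\DL_g\sqcup\calG$ from Step 1, the family $\{G_i\}\cup\{C_s\}\cup\{S_r\}$ is a partition. Each $G_i$ is nonempty, as it contains $i$, so no empty blocks occur.

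The only delicate point is Step 1's claim that elements of $M$ are never in $\DL_g$ and that ground points outside $M$ are not in $D$. Both follow directly from the definitions: anchor points and ground points lie in $I-D$, and anchor dead leaves have their predecessor in $D$. The rest is bookkeeping.
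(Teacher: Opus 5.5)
Your proof is correct and follows the route the paper indicates: the paper only says the corollary follows easily from Lemma \ref{lem-stems}(b) and the definitions, and you supply that bookkeeping through the disjoint decomposition $D\cup M\cup\calG=(D^n\cup M)\sqcup\DL_g\sqcup\calG$ and by grouping $\DL_g$ according to ground points. You are also right to read $\mathcal{C}$ as the set of ground points lying in $M$, since $\calG\cap M$ as literally defined is empty.
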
 

\begin{defin} (a) We set $f_{\DL(i)} = \prod_{j \in \DL(i)} f_j$ for each 
$i\in I$ such that $\DL(i)\not= \varnothing$. Since the $f_j$'s are not linked, 
they commute (see Remark \ref{commut-2A1}) and hence the order of the product does not matter.

\smallskip 
(b) For $r=1,\dots, t$, we set $L_r^* = L_r - \{m_r\}$ and define the operator 
$f_{L_r} = \prod_{d\in L_r^*} f_d$, 
where the product is taken in increasing order of the indices, \ie using the notation \eqref{Lr-stem}, one has 
\begin{equation}\label{def-f-Lr} 
f_{L_r} = f_{\ell_r} \cdots f_{m_r^1} . 
\end{equation} 
\end{defin} 

\subsection{The main proposition} 

Set 
\begin{equation} 
\pi_{D,I} = y_1 \otimes \cdots \otimes y_{\vert \Delta\vert} .
\end{equation} 
Then the $y_k$'s are described by the following proposition, which says, informally, that 
for each dead leaf $d$ the operator $f_d$ falls down to the predecessor of $d$, 
whereas the product $f_{L_r}$ of the operators in each gradient line moves up to the terminal anchor of that line.

\begin{prop}\label{prop-1} Recall that the elements of $M$ are $m_1 < \cdots < m_t$. Then, one has: 
$$
y_k = 
\begin{cases} 
f_{\DL(k)} f_{L_r} f_k \pi_k & \text{if $k = m_r$ for } r = 1,\dots, t; 
\\
f_{\DL(k)} f_k \pi_k & \text{if $k$ is a receiving node or $k\in \calG$}; 
\\
 f_k \pi_k & \text{else}. 
\end{cases}
$$
where in the first case, $f_{\DL(m_r)} = 1$ if $m_r$ is an anchor point which is not a corolla point. 
\end{prop}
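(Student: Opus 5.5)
We apply the factors of $f_D^\nea$ to $\pi_I$ one at a time and track, using Lemma \ref{lem-BS} (a), into which tensor factor each $f_d$ lands. By Lemma \ref{prod-by-depth} we may write $f_D^\nea = f_{D[h]}\cdots f_{D[0]}$. We proceed by induction on the depth $k$: first apply all $f_d$ with $d\in D[0]$, then those with $d\in D[1]$, and so on. Within one depth level the operators commute by Remark \ref{commut-2A1}, so they can be treated one at a time in any order.

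The inductive invariant describes the element $x^{(k)} = f_{D[k-1]}\cdots f_{D[0]}\pi_I$. Each dead leaf $d$ of depth $<k$ has contributed $f_d$ to the factor $y_{d^-}$. Each $d\in D^*$ of depth $<k$ has contributed $f_d$ to the factor indexed by $\mu(d)$, stacked in increasing order, as in \eqref{def-f-Lr}. All other factors are as in Lemma \ref{lem-pi-I}. More precisely, the factor indexed by $\sigma(d)$ currently holds the accumulated operators of the gradient flow. These operators are to be transferred along $\sigma$, and the key observation is that $\sigma(d)^- = d$.

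For a single $f_d$ we compute $\Phi_d(x,j) = \varphi_d(x_j) + \Sigma_d(x,j)$ for all $j$ and locate the first maximum. Only factors indexed by $d$, by neighbours of $d$ in $I$ (namely $d^-$ and the elements of $C_I(d)$), and by factors already modified contribute nonzero $d$-weights. Since all edges are simple bonds and the relevant factors are of the form $f_{\cdots} f_k \pi_k$ in fundamental crystals, the $d$-weights and $\varphi_d$ are $0$ or $\pm 1$. We determine them by listing the small strings in $B(\omega_k)$ that occur; each is a product of at most a few $f$'s along a path of simple bonds. We then split into cases.

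\emph{(i) Dead leaf with $c_I(d)=0$ or $1$.} The running sum $\Sigma_d$ rises by $1$ at $d^-$, because $f_{d^-}\pi_{d^-}$ has $d$-weight $1$. At $j=d$ the factor $f_d\pi_d$ has $\varphi_d=0$, and at the unique cover $j\to d$ (if any) there is a drop. The ordering in Definition \ref{def-order}, with elements of $D$ last at each height, makes the first maximum occur at $j=d^-$.

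\emph{(ii) Vivid leaf.} The contributions $+1$ from each $i\in C_I(d)$ (each $f_i\pi_i$ has $d$-weight $1$) push the maximum to the largest element of $C_I(d)$, which is $\sigma(d)$.

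\emph{(iii) $d\in D^*$ of depth $\geq1$.} The factor at $\sigma(d')$ for each $d'\in U(d)$ now carries $f_{d'}$ on top. This changes its $d$-weight to $+1$, whereas covers which are dead leaves in $\DL^n$ received their operator at $d$ itself and contribute accordingly. One checks that the first maximum is attained at $\mu(d^+)$, the top of the flow with maximal index. In case (2) of Lemma \ref{lem-def-m} this is the largest dead-leaf cover.

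The main obstacle is case (iii). The partial sums $\Sigma_d$ involve factors such as $f_{d'}\cdots f_{m}\pi_m$ lying far from $d$ in the numbering, and we must verify that among the competing tops $\mu(d')$ the largest index wins. We must also verify that no earlier index ties, because the lemma picks the \emph{first} maximum. We would first handle this by computing the $d$-weight and $\varphi_d$ of $f_{L}f_m\pi_m$ explicitly along the stem, which is a type $A$ string. This should show that each branch contributes a net $+1$ exactly at its top, and that all other factors contribute net zero to $\Sigma_d$ between consecutive tops. Injectivity of $\sigma$ (Lemma \ref{lem-def-m}) and the partition in Lemma \ref{lem-stems} then ensure that different stems do not interfere. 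Corolla points and ground points in $\calG$ are covered by combining cases (i) and (ii), since the operators $f_{\DL(i)}$ commute with everything else relevant. Nonvanishing follows because each $f$ acts nontrivially in the chosen factor, as $\varphi>0$ there.
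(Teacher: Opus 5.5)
Your strategy is the paper's own: write $f_D^\nea=f_{D[h]}\cdots f_{D[0]}$ by Lemma \ref{prod-by-depth}, apply the operators level by level, and place each $f_d$ by the first-maximum rule of Lemma \ref{lem-BS}(a). The leaf cases are essentially fine, with two small corrections. In (i) the convention of Definition \ref{def-order}(2) plays no role: $\Phi_d$ reaches its maximum $1$ at $d^-<d$ and never exceeds it. In (ii) the point is $c_I(d)\geq 2$, so that the final value $\delta+c_I(d)-1$ beats the value $\delta$ already reached at $d^-$.

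The gap is in (iii), which is the real content of the proposition and which you only assert. First, your invariant is misstated. Operators are never transferred along $\sigma$: once applied, $f_{d'}$ stays in the factor indexed by $\mu(d')$, and for $\dep_D(d')\geq 1$ the factor $y_{\sigma(d')}$ is $d$-neutral. The correct bookkeeping has three parts:
\begin{itemize}
\item the factors $y_{d^-}$ (if $d^-\in I$), $y_k$ for $k\in C_I(d)$, and $y_{\mu(d')}$ for $d'\in U(d)$ all have $\wt_d=\varphi_d=1$, since $\vep_d=0$ by a weight argument;
\item $y_d$ has $\wt_d=q-1$ and $\varphi_d=(q-1)_+$ with $q=\vert\DL(d)\vert$, so your claim that all values lie in $\{0,\pm1\}$ is false;
\item every other factor is $d$-neutral.
\end{itemize}
Hence, beyond $d$, $\Phi_d$ is constant except for unit jumps at the indices in $C_I(d)\cup\mu(U(d))$, and $f_d$ lands on the largest of them.

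The missing idea is the proof that this largest index is $\mu(d)$. Injectivity of $\sigma$ and Lemma \ref{lem-stems} say nothing about this comparison. The paper proves it in the lemma ``$\dep_D(d_t)=u-1$ and $\mu(d)=\mu(d_t)$''. For $d'\in U(d)$, $\mu(d')$ has height $\haut(d')+\dep_D(d')$ if it is an anchor dead leaf, and one more if it is an anchor point. So the largest index is the top of a child of maximal depth. The only possible height ties are between an anchor point (in $I-D$) and an anchor dead leaf (in $D$), and these are resolved by Definition \ref{def-order}(2). The same convention is what puts $b_q$ after the covers in $I-D$ in the depth-$1$ case.

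Be aware also that this argument requires $d^+$ to be the element of $U(d)$ with $\mu(d^+)$ maximal. That is what the paper's proof actually uses, not the element with $\sigma(d^+)$ maximal as literally written in Lemma \ref{lem-def-m}(3), and the two rules differ. In Example \ref{xamp-GDM}, for $d=0$ the literal rule gives $\sigma(0)=3$ and $\mu(0)=8$. But the competing tops are $\mu(3)=8$ and $\mu(2)=a$. Lemma \ref{lem-BS}(a) applies $f_0$ to the factor indexed by $a$, where $\Phi_0$ reaches $5$, against $4$ at the index $8$. So your sentence ``the first maximum is attained at $\mu(d^+)$'' is correct only once $d^+$ is defined via $\mu$, and must then be proved by the height argument above.
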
 

Before proving this proposition, let us give an example.

\begin{xamp}\label{xamp-GDM}  Consider the following tree $I$, where $a = 10$, \dots, $h = 17$, and the elements of $D$ are indicated in bold: 
$$
\scalebox{.8}{
\xymatrix{
& h \ar[d] & & 
\\
& {\bf g} \ar[d] & f \ar[d] & e \ar[d] 
\\
& b \ar[d] & {\bf c} \ar[dr]  & {\bf d} \ar[d] 
\\
 {\bf 9}   \ar[rd] & {\bf a}  \ar[d] & 7  \ar[d] & 8 \ar[ld] 
\\
{\bf 4} \ar[rd] & {\bf 5} \ar[d]  & {\bf 6} \ar[d] 
\\
{\bf 1} \ar[rd] & {\bf 2} \ar[d]  & {\bf 3} \ar[ld] 
\\
 & {\bf 0} & 
}}
$$
Here, $c,d,g$ are the dead leaves in $\DL_g$, the other dead leaves are $1,4,9,a$, and 
$6$ is a vivid leaf. 
One has $M = \{8,a\}$ and $8$ is a corolla point. Further, one has $\calG = \{b\}$ and the partition of $D\cup M\cup 
\calG$ is: 
$$
\scalebox{.8}{
\xymatrix{
{\bf g} \ar[d] \\ b } \qquad 
\xymatrix{ 
 {\bf 9}   \ar[rd] & {\bf a}   \ar[d] 
\\ 
{\bf 4} \ar[rd] & {\bf 5} \ar[d] 
\\
& {\bf 2} 
}
\qquad 
\xymatrix{ 
{\bf c} \ar[dr]  & {\bf d} \ar[d] 
\\
& 8  \ar[d] 
\\
& {\bf 6} \ar[d] 
\\
{\bf 1} \ar[rd] & {\bf 3} \ar[d] 
\\
 & {\bf 0} 
}
}
$$
Then  Proposition \ref{prop-1} says that the elements $y_k$ for $k\in D\cup M\cup \calG$ are given as follows, where the terms coming from 
$f_D^\nea$ are indicated in bold:  
$$
\begin{array}{|c|} 
\hline 
f_g \pi_g
\\[2pt] 
{\bf f_g} f_b \pi_b
\\
\hline 
\end{array} 
\qquad 
\begin{array}{|cc|} 
\hline 
f_9 \pi_9 & {\bf f_2 f_5} f_a \pi_a
\\[2pt] 
f_4 \pi_4 & {\bf f_9 f_a} f_5 \pi_5  
\\[2pt] 
& {\bf f_4} f_2 \pi_2 
\\
\hline 
\end{array} 
\qquad 
\begin{array}{|cc|} 
\hline 
f_c \pi_c  & f_d \pi_d 
\\[2pt] 
& ({\bf f_0 f_3 f_6})( {\bf f_c f_d}) f_8 \pi_8  
\\[2pt] 
& f_6 \pi_6 
\\[2pt] 
f_1 \pi_1 & f_3 \pi_3 
\\[2pt] 
 & {\bf f_1} f_0 \pi_0 
\\
\hline 
\end{array} 
$$
\end{xamp}

\subsection{Proof of \Prop  \ref{prop-1}} We are going to prove the result by induction on the depth of 
$D$. Further, using \eqref{inc-comp}, we may restrict ourselves, when convenient, to the case where $D$ is 
a tree. Note that if $k = m_r$ is a corolla point, the commuting factors $f_{L_r}$ and $f_i$, for $i\in \DL(k)$, will 
be obtained separately, as coming from different connected components of $D$. 

\smallskip Recall that $\pi_I = x_1 \otimes \cdots \otimes x_{\vert \Delta\vert}$, with $x_i = f_i \pi_i$ 
for $i \leq N = \vert I \vert$ and $x_k = \pi_k$ for $k> N$. For each $i\in I$, one has 
\begin{equation} 
\wt(f_i \pi_i) = -\omega_i + \omega_{i^-} + \sum_{\substack{s\in \Delta\\ s\to i}} \omega_s
\end{equation} 
where the term $\omega_{i^-}$ has to be omitted if $i^-$ does not exist. Therefore, for every $d\in D$, one has 
\begin{equation}\label{eqref-depth0} 
\begin{cases} 
\phi_d(x_d) = 0, \; \wt_d(x_d) = -1 & 
\\
\phi_d(x_k) = 1 = \wt_d(x_k) & \text{if $k \in C_I(d)$ or } k = d^- \in I , 
\\
\phi_d(x_k) = 0 = \wt_d(x_k) & \text{else}. 
\end{cases} 
\end{equation} 

\subsubsection{}\label{subsub-depth0} Firstly, let $d$ be a leaf of $D$. 

\smallskip (1-a) Suppose that $d$ is a vivid leaf. Set $\delta = 1$ if $d^-$ exists and belongs to $I$, and 
$\delta = 0$ else. Let $\ell, \dots, m = \sigma(d)$ be the elements of $C_I(d)$, arranged in increasing order. 
Their number is $c_I(d) \geq 2$. 
Then the values of $\phi_d, \wt_d, \Sigma_d$ and $\Phi_d$ at $d, \ell, \dots, m$ are as shown in 
the following table:  
$$
\begin{array}{|c|c|c|c|c|c|}
\hline 
 & f_d \pi_d & f_\ell \pi_\ell & \cdots & f_m \pi_m   
\\
\hline 
\varphi_d & 0 & 1 & \cdots & 1 
\\
\hline 
\wt_d & -1 & 1 & \cdots & 1 
\\
\hline 
\Sigma_d & \delta & \delta- 1 & \cdots & \delta + c_I(d)  - 2  
\\
\hline 
\Phi_d & \delta & \delta & \cdots & \delta + c_I(d)  - 1  
\\
\hline
\end{array}
$$
Further, by \eqref{eqref-depth0}, one has $\Phi_d(\pi_I,k) = c_I(d) - 1 + \delta$ for every $k > m$. 
Since $c_I(d) - 1 + \delta > \delta$, it follows from Lemma \ref{lem-BS} that when one applies 
$f_d$ to $\pi_I$, the factor $f_d$ is attached at the $m$-th place, as asserted in \Prop \ref{prop-1}.  

\smallskip (1-b) Suppose now that $d$ is a dead leaf, \ie $\delta = c_I(d)$ equals $0$ or $1$. 
Then, by Lemma \ref{lem-dom},  $i = d^-$ exists and belongs to $I$ (and $i\in D$ if $\delta =0$). 
If $\delta = 1$, let $\ell$ denote the unique element of $C_I(d)$. 
Then the values of $\phi_d, \wt_d, \Sigma_d$ and $\Phi_d$ at $d, \ell, \dots, m$ are as shown in the 
 following table, where the last column exists only if $\delta = 1$: 
$$
\begin{array}{|c|c|c|c|c|c|}
\hline 
 & f_i \pi_i & f_d \pi_d & f_\ell \pi_\ell  
\\
\hline 
\varphi_d & 1 & 0 & \delta  
\\
\hline 
\wt_d & 1 & -1 & \delta  
\\
\hline 
\Sigma_d & 0 &1 & 0  
\\
\hline 
\Phi_d & 1 & 1 & \delta  
\\
\hline
\end{array}
$$
Further, by \eqref{eqref-depth0}, one has $\Phi_d(\pi_I,k) = \delta$ for every $k > m$. 
Thus, it follows from Lemma \ref{lem-BS} that when one applies 
$f_d$ to $\pi_I$, the factor $f_d$ is attached at the $i$-th place, as asserted in \Prop \ref{prop-1}.  
This proves the proposition when $D$ has depth $0$.

\subsubsection{}\label{subsub-depth1}  Now, set $X^1 = f_{D[0]} \pi_I$ and write $X^1 = x_0^1 \otimes \cdots \otimes x_{\vert \Delta\vert}^1$. 
By the first step, we know that 
\begin{equation}\label{eqref-X1} 
x_k^1 = 
\begin{cases} 
f_{\DL(k)} f_k \pi_k & \text{if $k$ is a ground point or receiving node}, 
\\
f_v f_k \pi_k & \text{if $k = \sigma(v)$ for a vivid leaf } v, 
\\
x_k & \text{else}. 
\end{cases} 
\end{equation} 
Let $d\in D$ such that $\dep_D(d) = 1$. Set $\delta = 1$ if $d^-$ exists and belongs to $I$, and $\delta = 0$ 
else. 
Suppose that $C_I(d)$ contains dead leaves $b_1,\dots, b_q$, vivid leaves $v_1,\dots, v_p$, and elements 
$i_1,\dots, i_r$ of $I - D$, all arranged in increasing order. Since $\dep_D(d) = 1$ one has $p+q \geq 1$. 
Set $(q-1)_+ = \max(q{-}1,0)$ and $c'_I(d) = q + c_I(d)$. 

\smallskip Case (2-a). Suppose first that $p\geq 1$. Set $m = \sigma(v_p) = \mu(d)$. 
Then one has the following table, where the relative order of the $b_k$'s and the $v_\ell$'s does not matter, 
since all have height $< \haut(m)$, hence all are $< m$: 
$$
\begin{array}{|c|c|c|c|c|c|} 
\hline 
 &  f_{\DL(d)} f_d \pi_d & f_{i_1} \pi_{i_1} &  \cdots &  f_{v_p} \pi_{v_p} & f_{v_p} f_m \pi_m
\\
\hline 
\varphi_d & (q{-}1)_+ & 1 & \cdots & 1 & 1  
\\
\hline 
\wt_d & q{-}1 & 1 & \cdots & 1 & 1 
\\
\hline 
\Sigma_d & \delta & \delta {-1 +} q   & \cdots & \delta {-2 +} c'_I(d) & \delta {-1 +} c'_I(d) \rule{0pt}{10.5pt}
\\
\hline 
\Phi_d & \delta{+} (q{-}1)_+ & \delta {+} q   & \cdots &  \delta {-1 +} c'_I(d) & \delta {+} c'_I(d) \rule{0pt}{10.5pt}
\\
\hline
\end{array}
$$
and for $k > m$ one has $\varphi_d(x_k^1) = 0 = \wt_d(x_k^1)$, hence 
 the value of $\Phi_d$ remains constant for $k \geq m$. 
 Since $p\geq 1$, the maximum value of $\Phi_d$ is 
 $\delta{+}c'_I(d)$, which is attained for the first time at the $m$-th place. Thus,  
it follows from Lemma \ref{lem-BS} that when one applies 
$f_d$ to $X^1$, the factor $f_d$ is attached at the $m$-th place, where 
$m$ is the anchor point $\mu(d)$, as asserted in \Prop \ref{prop-1}.  

\smallskip Case (2-b). Suppose now that $p = 0$. Then $q \geq 1$ and $\sigma(d) = b_q$. 
Further, $c'_I(d) = 2q + r$. 
By the choice of our ordering in \Def \ref{def-order}, 
$i_1,\dots, i_r$ are smaller than $b_1,\dots, b_q$. Then, one has the following table: 
$$
\begin{array}{|c|c|c|c|c|} 
\hline 
 &  f_{\DL(d)} f_d \pi_d & f_{i_1} \pi_{i_1} &  \cdots &  f_{b_q} \pi_{b_q} 
\\
\hline 
\varphi_d & q{-}1 & 1 & \cdots & 1   
\\
\hline 
\wt_d & q{-}1 & 1 & \cdots & 1  
\\
\hline 
\Sigma_d & \delta & \delta {-1 +} q   & \cdots & \delta {-2 +} c'_I(d)   \rule{0pt}{10.5pt}
\\
\hline 
\Phi_d & \delta{+} q{-}1 & \delta {+} q   & \cdots &  \delta {-1 +} c'_I(d)  \rule{0pt}{10.5pt}
\\
\hline
\end{array}
$$
and for $k > m$ one has $\varphi_d(x_k^1) = 0 = \wt_d(x_k^1)$, hence 
 the value of $\Phi_d$ remains constant for $k \geq b_q$. Since $q\geq 1$, the maximum value of $\Phi_d$ is 
 $\delta{+}c'_I(d)$, which is attained for the first time at the place $b_q$. Thus,  
it follows from Lemma \ref{lem-BS} that when one applies 
$f_d$ to $X^1$, the factor $f_d$ is attached at the $b_q$-th place, where 
$b_q$ is the anchor dead leaf $\mu(d)$. 
This proves the proposition when $D$ has depth $\leq 1$. 

\subsubsection{}\label{subsub-depth2}  Finally, assume that $d\in D$ satisfies $\dep_D(d) = u \geq 2$ and that the proposition is 
proved up to depth $u-1$. Set 
\begin{equation} 
X^u = f_{D[u-1]} \cdots f_{D[0]} \pi_I 
\end{equation} 
and write $X^u = x_1^u \otimes \cdots \otimes x^u_{\vert \Delta \vert}$. 

As before, we set $\delta = 1$ if $i = d^-$ exists and belongs to $I$, and $\delta = 0$ else. 
Suppose that $C_I(d)$ contains elements $i_1,\dots, i_r$ of $I - D$, 
dead leaves $b_1,\dots, b_q$, vivid leaves $v_1,\dots, v_p$, all arranged in increasing order, and  
elements $d_1,\dots, d_t$ of $D$ of depth $\geq 1$, arranged so that 
$\mu(d_1) < \cdots < \mu(d_t)$ (if $t\geq 2$). 
Since $\dep_D(d) \geq 2$, one has $t\geq 1$ and each $\mu(d_s)$ is defined. 

\begin{lem} 
One has $\dep_D(d_t) = u-1$ and $\mu(d) = \mu(d_t)$. 
\end{lem}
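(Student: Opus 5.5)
The plan is to attach to every $x \in D^*$ a numerical invariant that predicts the height of $\mu(x)$, and then to compare the children of $d$ using the numbering of Definition~\ref{def-order}. I will prove, by induction along the recursion of Lemma~\ref{lem-def-m} and simultaneously with the lemma itself (the induction on depth is already running in \S\ref{subsub-depth2}), the following claim:
$$
\haut_I(\mu(x)) = \haut_I(x) + \dep_D(x) + 1 \ \text{ if $\mu(x)$ is an anchor point, }\qquad
\haut_I(\mu(x)) = \haut_I(x) + \dep_D(x) \ \text{ if it is an anchor dead leaf}.
$$
The base cases are direct. A vivid leaf $x$ has $\dep_D(x)=0$, and $\mu(x)=\sigma(x)\in C_I(x)$ lies one level higher and is an anchor point. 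A depth-$1$ element all of whose $D$-children are dead leaves has $\mu(x)=\sigma(x)\in C_D(x)$, one level higher, and this is an anchor dead leaf. In case (3), $\sigma(x)=x^+$ is one level higher than $x$. If the lemma holds at depth $\dep_D(x)$, then $\dep_D(x^+)=\dep_D(x)-1$, so $\haut+\dep$ is preserved along the flow, and $\mu(x)=\mu(x^+)$ has the same type as $\mu(x^+)$.

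Now take $d$ with $\dep_D(d)=u\ge 2$ and set $H=\haut_I(d)+1$, the common height of all elements of $C_I(d)$.

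\emph{Depth of $d_t$.} By the definition of depth, some $D$-child of $d$ has depth $u-1\ge 1$, so it is one of the $d_s$. By the claim, a child of depth $u-1$ has $\haut(\mu)\ge H+u-1$. A child of depth $k\le u-2$ has $\haut(\mu)\le H+k+1\le H+u-1$, with equality only when $k=u-2$ and $\mu$ is an anchor point. Definition~\ref{def-order}(1) says larger height forces a larger index. In the single tie case, the depth-$(u-1)$ child has $\mu$ at height $H+u-1$, hence $\mu$ is an anchor dead leaf, which lies in $D$. Definition~\ref{def-order}(2) puts elements of $D$ after elements of $I-D$ of the same height, so this $\mu$ again wins. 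Together with injectivity of $\sigma$, which keeps the $\mu(d_s)$ distinct, this shows that the maximum $\mu(d_t)$ is attained by a child of depth $u-1$, i.e.\ $\dep_D(d_t)=u-1$. The vivid leaves $v_j$ have $\mu(v_j)$ at height $H+1<H+u-1$, so they never compete.

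\emph{Equality $\mu(d)=\mu(d_t)$.} By Lemma~\ref{lem-def-m}(3) one has $\mu(d)=\mu(d^+)$, where $d^+\in U(d)$ maximizes $\sigma(d^+)$. It therefore remains to see that $d^+=d_t$, i.e.\ that the element maximizing $\sigma$ also maximizes $\mu$. This is the step I expect to be the main obstacle. All values $\sigma(d')$, for $d'\in U(d)$, have the same height $H+1$, so they are compared only through the numbering within a height level. My plan is to strengthen the inductive claim so that the comparison of the $\sigma(d')$ and the comparison of the $\mu(d')$ agree. I would first try to show that, going up from $H+1$, the flows are ordered compatibly with the level-by-level numbering. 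If this compatibility fails for an arbitrary numbering satisfying Definition~\ref{def-order}, I would instead read $d^+$ in Lemma~\ref{lem-def-m}(3) as the element with maximal $\mu$, which is what the tables in \S\ref{subsub-depth1}--\ref{subsub-depth2} actually use. With that reading the comparison argument above gives $\mu(d)=\mu(d_t)$ directly.
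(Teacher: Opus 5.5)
Your argument for $\dep_D(d_t)=u-1$ is the paper's: compare the heights of the $\mu(d_s)$ (for a child of depth $u-1$, an anchor dead leaf sits at $\haut(d)+u$ and an anchor point at $\haut(d)+u+1$), and break the single possible tie with Definition~\ref{def-order}(2). That works because anchor points lie in $I-D$ and anchor dead leaves lie in $D$. Turning the height formula into an explicit simultaneous induction is a useful addition, since the paper uses that formula without comment. There is one slip: for $u=2$ the inequality $H+1<H+u-1$ is false. In that case $\mu(v_j)$ and an anchor dead leaf $\mu(d_t)$ can both sit at height $H+1$, and you need the same tie-break again. This is why the paper stresses that $\mu(v_j)$ \emph{is an anchor point}.

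For $\mu(d)=\mu(d_t)$, the obstacle you flag is real, and your first plan cannot succeed: under the literal rule of Lemma~\ref{lem-def-m}(3), which maximizes $\sigma(d')$, the statement is false. Take $\Delta=I$ with edges $2\to1$, $3\to1$, $4\to2$, $5\to3$, $8\to4$, $6\to5$, $7\to5$ (type $\mathrm{D}_8$, root $1$), and $D=\{1,2,3,4,5,8\}$.
\begin{itemize}
\item The numbering satisfies Definition~\ref{def-order}. At height $3$, the elements $6,7\in I-D$ precede $8\in D$. At height $2$, both $4$ and $5$ lie in $D$, so their order is free.
\item $\lambda_{D,I}$ is dominant by Lemma~\ref{lem-dom}.
\item $8$ is a dead leaf falling on $4$, so $\sigma(4)=8$ is an anchor dead leaf.
\item $5$ is a vivid leaf with $\sigma(5)=7$.
\item $\sigma(3)=5$ and $\sigma(2)=4$, hence $\mu(2)=8$ and $\mu(3)=7$.
\end{itemize}
At $d=1$ (where $u=3$) one has $\sigma(2)=4<5=\sigma(3)$. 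The literal rule therefore gives $\mu(1)=7$, whereas $d_t=2$ and $\mu(d_t)=8$.

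The crystal agrees with the $\mu$-reading. Lemma~\ref{lem-BS} applied to $X^3$ gives $\Phi_1(X^3,k)=0,0,1,1,1,1,2,3$ for $k=1,\dots,8$, so $f_1$ really lands at place $8$. Proposition~\ref{prop-1} itself therefore requires $d^+$ to be the $\mu$-maximizer.

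Your fallback is thus not optional; it is the only correct reading. It is also exactly what the paper's proof does tacitly: its last step only shows $\mu(v_j)<\mu(d_t)$ and then concludes ``therefore $\mu(d)=\mu(d_t)$''. With that reading, $x^+=d_t$ holds by definition. Your inductive step $\dep_D(x^+)=\dep_D(x)-1$ then becomes immediate, and your argument is complete.
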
 

\begin{proof} 
Since $\dep_D(d) = u$  there exists at least one $d_s$ such that $\dep_D(d_s)$ $= u-1$. Then 
$\mu(d_s)$ has height $\haut(d)+u$ if it is an anchor dead leaf, and $\haut(d)+u+1$ if it is an anchor point. 
If one had $\dep_D(d_t) \leq u-2$, then $\mu(d_t)$ would have height $\leq \haut(d)+u$, with equality only 
if $\mu(d_t)$ is an anchor point. But by our choice of ordering \ref{def-order}, the anchor points of a given height 
are smaller than the anchor dead leaves of the same height. This proves that  $\dep_D(d_t) = u-1$. 

Similarly, each $\mu(v_j)$, for $j = 1,\dots, p$, is an anchor point of height $\haut(d) + 2$, hence is smaller 
than $\mu(d_t)$. Therefore, $\mu(d) = \mu(d_t)$. 
\end{proof}

Then, by the inductive hypothesis, we have 
\begin{equation}\label{eqref-depthu} 
\begin{cases} 
\phi_d(x_{d^-}^u) = 1 = \wt_d(x_{d^-}^u) & \text{if $d^-$ exists and is in } I, 
\\
\phi_d(x_d^u) = (q{-}1)_+, \; \wt_d(x_d) = q{-}1 & 
\\
\phi_d(x_k^u) = 1 = \wt_d(x_k) & \text{if } k\in C_I(d), 
\\
\phi_d(x_k^u) = 1 = \wt_d(x_k) & \text{if } k \in \mu\big( \{v_1,\dots, d_t\} \big), 
\\
\phi_d(x_k^u) = 0 = \wt_d(x_k^u) & \text{else}. 
\end{cases} 
\end{equation} 
Note that for $j$ running through the ordered set  $R(d) = \{i, d\} \cup C_I(d) \cup \{\mu(v_1), \dots, \mu(v_p)\} \cup 
\{\mu(d_1), \dots, \mu(d_t)\}$, the function $\Phi_d$ can fail to increase only for $j = i$. Then 
each element $k > d$ of $R(d)$ satisfies $\varphi_d(x^u_k) = 1 = \wt_d(x^u_k)$, hence adds $1$ to the sum 
$\Sigma_d$. 

Since $t\geq 1$, one deduces from this that $\Phi_d(X^u,k)$ attains its maximum value  for the first time at the place 
$\mu(d)$ (and remains constant for $k\geq \mu(d)$). Thus,  
it follows from Lemma \ref{lem-BS} that when one applies 
$f_d$ to $X^u$, the factor $f_d$ is attached at the place corresponding to 
 the anchor point or anchor dead leaf $\mu(d)$. This completes the proof of \Prop \ref{prop-1}.  
 \hfill $\square$

\section{Completion of the proof of Theorem \ref{th-1}} 

In this section we will prove the following: 

\begin{prop}\label{prop-2} Let $D$ be a subset of $I$ such that $\lambda_{D,I}$ is dominant. 
Then $\pi_{D,I}$ is a $\rho$-dominant element of the crystal $B(\rho)$, \ie 
one has $\vep_s(\pi_{D,I}) \leq 1$ for all $s\in \Delta$.
\end{prop}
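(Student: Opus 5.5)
We compute $\vep_s(\pi_{D,I})$ for every $s\in\Delta$ directly, using the explicit description of $\pi_{D,I}=y_1\otimes\cdots\otimes y_{\vert\Delta\vert}$ from Proposition \ref{prop-1} together with Lemma \ref{lem-BS}(b):
$$
\vep_s(\pi_{D,I})=\max_j \Upsilon_s(\pi_{D,I},j),
\qquad
\Upsilon_s(\pi_{D,I},j)=\vep_s(y_j)-\sum_{h>j}\wt_s(y_h).
$$
It therefore suffices to know, for each factor $y_k$, the pair $(\vep_s(y_k),\wt_s(y_k))$. Each $y_k$ lives in the fundamental crystal $B_k$ and has one of the forms $\pi_k$, $f_k\pi_k$, $f_{\DL(k)}f_k\pi_k$ or $f_{\DL(k)}f_{L_r}f_k\pi_k$. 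The key input is \Prop 2.3 of \cite{BG25}: this is the case $D=\varnothing$, saying that $\pi_I$ is $\rho$-dominant. It also yields the local data for elements obtained from $\pi_k$ by a product of lowering operators along a stem of simply-laced vertices, of the type $f_{\ell_r}\cdots f_{m_r^1}f_{m_r}\pi_{m_r}$.

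The first step is to record these local values. For $y_k=f_k\pi_k$ we have $\vep_k=1$, $\wt_k=-1$, and $\vep_s=0$, $\wt_s=1$ for $s$ a neighbour of $k$ in $\Delta$.

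For a corolla or stem factor, I will use the $A$-type computation along the stem $m_r\to m_r^1\to\cdots\to\ell_r$. In the fundamental crystal $B_{m_r}$, the element $f_{\ell_r}\cdots f_{m_r^1}f_{m_r}\pi_{m_r}$ is the usual ``path'' element. It has $\vep=1$ only at $\ell_r$ and $\wt=-1$ only at $\ell_r$. It has $\wt=+1$ at $\ell_r^-$ and at the neighbours of the stem other than $m_r$ and the stem itself. Prepending $f_{\DL(k)}$ changes these values only at the dead leaves and their neighbours; since dead leaves are pairwise non-adjacent, this is again computed by Lemma \ref{lem-BS} inside one fundamental crystal. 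This gives a finite list of local profiles, one for each type of $k$.

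The second step is the case analysis on $s$. If $s\notin I$, the factors affecting $s$ are those $y_k$ with $k$ adjacent to $s$, plus $y_s=\pi_s$. All these contribute $\vep_s=0$ and nonnegative $\wt_s$, so $\Upsilon_s\le 0$. If $s\in I-D$, the only new factors compared with $\pi_I$ carrying $\vep_s>0$ or negative $\wt_s$ are the modified ones, and the modification happens at positions adjacent to $s$. Comparing the partial sums $\Sigma'_s$ with those of $\pi_I$ and invoking \cite{BG25}, the maximum cannot exceed $1$.

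If $s\in D$, I split according to whether $s$ is a dead leaf, a vivid leaf, a receiving node, or an interior point of a gradient line $L_r$. In each case the operator $f_s$ sits in exactly one factor, namely $y_{s^-}$, $y_{\sigma(s)}$ or $y_{\mu(s)}$. That factor has $\vep_s\le 1$ and $\wt_s=-1$ or $0$. The factor $y_s$ itself has $\wt_s\ge 0$ after the lowering operators of its covers have been absorbed. Every $y_k$ with $k$ to the right of the relevant position has $\wt_s\ge 0$. Hence each $\Upsilon_s(\cdot,j)$ is at most $1$.

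The main obstacle is the case where $s$ lies on a gradient line. There, the $f_s$ is transported up to position $\mu(s)$, which is far to the right of $s$. In between, the factors $y_{s^-}$, $y_s$ and the anchors $\mu(d')$ of the sibling branches $d'\in C_D(s)$ contribute $\wt_s=+1$ or $\wt_s=-1$. I would first try to show that, reading right-to-left from $\mu(s)$, the weights $\wt_s(y_h)$ sum nonnegatively before any factor with $\vep_s=1$ is reached. This mirrors the fact, proved in Proposition \ref{prop-1}, that $\Phi_s$ attains its maximum first at $\mu(s)$. The ordering of Definition \ref{def-order} (heights increasing, elements of $D$ last) is what should make the partial sums behave.
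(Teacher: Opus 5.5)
There is a genuine gap, and it lies in the case $s=d\in D$, where two of your claims are false.

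First, $\wt_d(y_d)\ge 0$ holds only if $d$ is a receiving node or an anchor dead leaf. For a vivid leaf, for a dead leaf that is not an anchor, and for $d=\ell_r$ not a receiving node, Proposition~\ref{prop-1} gives $y_d=f_d\pi_d$, so $\wt_d(y_d)=-1$ and $\vep_d(y_d)=1$. The operators of the covers of $d$ are absorbed into $y_d$ only when those covers are dead leaves falling on $d$.

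Second, ``every $y_k$ to the right of the relevant position has $\wt_d\ge 0$'' fails for a non-anchor dead leaf. Its extra $f_d$ lands at $d^-<d$, while $y_d=f_d\pi_d$ sits to its right.

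Consequently there can be two indices $i<j$ with $\vep_d(y_i)=\vep_d(y_j)=1$ and $\wt_d(y_i)=\wt_d(y_j)=-1$. These are $d$ itself and the position receiving the $f_d$ that comes from $f_D^\nea$, which happens when $d$ is a dead leaf or $d=\ell_r$. A priori $\Upsilon_d(\pi_{D,I},i)$ could then be $2$. The real content of the proof is to show $\Sigma'_d(\pi_{D,I},i)\le 0$, that is, to exhibit to the right of $i$ a factor with $\wt_d=+1$ and $\vep_d=0$ that compensates the $-1$ of the other factor. Your proposal never does this, so the conclusion ``each $\Upsilon_s(\cdot,j)$ is at most $1$'' does not follow.

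The paper supplies the compensating factor case by case:
\begin{itemize}
\item for a dead leaf $d$ with $d^-\notin D$, the factor at the unique element of $C_I(d)$;
\item for a dead leaf $d$ with $d^-\in D$, the anchor factor $y_{\mu(d^-)}$, which contains $f_{d^-}$ and lies to the right of $d$ precisely because $d$ is not the anchor dead leaf;
\item for a vivid leaf $d=\ell_r$, the factors at the other covers $i_1<\dots<i_q$, with $q\ge 1$;
\item for $d=\ell_r$ of depth $\ge 1$, the factor $y_{\sigma(d)}$ with $d<\sigma(d)<\mu(d)$. Here having two such indices forces $d$ not to be a receiving node, so $\sigma(d)$ is not a dead leaf.
\end{itemize}

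The difficulty you anticipate on gradient lines is also misplaced. For $d\in L_r^*$ with $d\neq\ell_r$ one has $\vep_d(y_{\mu(d)})=\wt_d(y_{\mu(d)})=0$, so only the bottom point $\ell_r$ of each line needs an argument. What settles it is the $+1$ from $y_{\sigma(d)}$ (or from the other covers), not an analogue of the $\Phi$-maximum argument in Proposition~\ref{prop-1}.

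Two smaller points. The local values $\vep_d(y_k)\in\{0,1\}$ cannot be computed with Lemma~\ref{lem-BS}, which only concerns tensor products; the paper takes them from \cite{BG25}, \Prop 2.3. And your comparison with $\pi_I$ for $s\in I-D$ should be replaced by the one-line weight argument the paper uses for all $s\notin D$. The coefficient of $\alpha_s$ in $\rho-\wt(\pi_{D,I})$ is at most $1$, so $\wt(\pi_{D,I})+2\alpha_s\not\le\rho$ and hence $e_s^2\pi_{D,I}=0$.
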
 

The proof will occupy the rest of this section. 
Firstly, note that if $j\not\in D$, then $2\alpha_j + \wt(\pi_{D, I})$ is not $\leq \rho$, hence not a weight of $B(\rho)$. Therefore, $e_j^2 \, \pi_{D,I} = 0$, whence $\vep_j(\pi_{D, I}) \leq 1$. Thus, it suffices to prove that $\vep_d(\pi_{D, I}) \leq 1$ for every $d\in D$. 

\smallskip 
Recall the notation $\pi_{D, I} = y_1 \otimes \cdots \otimes y_{\vert \Delta \vert}$. Let $d\in D$. One deduces from 
\Prop \ref{prop-1} that $\wt_d(y_k) < 0$ if and only if: 
\begin{enumerate} 
\item $k=d$ and $y_d = f_d \pi_d$, or 

\smallskip 
\item $d = \ell_r$ for some $r=1,\dots, t$ and $k = \mu(d)$, or 

\smallskip
\item $d\in \DL(k)$, 
\end{enumerate} 
and, in these cases, $\wt_d(y_k) = -1$. Moreover, using  \cite{BG25}, \Prop 2.3, one sees that 
$\vep_d(y_k) = 1$ in these cases, while $\vep_d(y_k) = 0$ in all other cases. 
Note further that cases (2) and (3) cannot occur together, since in case (3) $d$ is a dead leaf, while it is not 
in case (2). 
One deduces from this that: 
\begin{enumerate}
\item $\Sigma'_d(\pi_{D,I},k)$ is always $\leq 1$. 

\item The set $S(d)$ of indices $k$ for which $\vep_d(y_k)$ equals $1$ has cardinality at most $2$. 

\item If $S(d)\not=\varnothing$ and if $j$ is its largest element, one has $\Sigma'_d(\pi_{D,I},j) = 0$ and hence 
$\Upsilon_d(\pi_{D,I},j) = 1$. 
Thus, the proposition is true for $d$ if $\vert S(d)\vert \leq 1$. 

\item If $S(d)$ contains two elements $i < j$, then 
for $k \leq j$, the value of $\Upsilon_d(\pi_{D,I},k)$ cannot increase until $k$ reaches $i$. 
\end{enumerate} 

Therefore, using Lemma \ref{lem-BS}, it suffices to prove that if $S(d)$ contains two elements $i < j$, 
then $\Sigma'_d(\pi_{D,I},i) \leq 0$, because this will imply that $\Upsilon_d(\pi_{D,I},k) \leq 1$ for $k\leq i$. 

\subsection{}\label{subsec-dead} 
Suppose that $d$ is a dead leaf such that $y_d = f_d \pi_d$. 
Then, by Lemma \ref{lem-dom}, $i = d^-$ exists and belongs to $I$. 
There are two subcases. 

\subsubsection{}\label{sub-dead-isol}  Suppose that $i\not\in D$. Then necessarily $C_I(d)$ is non-empty, hence contains a unique element 
$j$. Then, the elements $y_k$ for $k = i,d,j$ and the values of $\vep_d,\wt_d, \Sigma'_d$ and $\Upsilon_d$ 
are given by the following table: 
$$
\begin{array}{|c|c|c|c|c|c|c|c|c|c|}
\hline 
 & f_d f_i  \pi_i  &  f_d \pi_d &   f_j \pi_j 
\\
\hline 
\vep_d & 1 &  1 &  0
\\
\hline 
\wt_d & -1 & -1 & 1
\\
\hline 
\Sigma'_d & 0 & 1 & 0 
\\
\hline 
\Upsilon_d &  1  & 1  & 0
\\
\hline
\end{array}
$$
This proves the result in this case. 

\subsubsection{}\label{sub-dead-not_isol} 
Suppose now that $i\in D$. Since $y_d = f_d \pi_d$, one sees that $d$ is not an anchor dead leaf, hence 
$\sigma(i) > d$ and $m_r = \mu(i)$ is $> d$. Since $f_i$ appears in the operator $f_{L_r}$, one has 
$\wt_d(y_{m_r}) = 1$. 
Hence one obtains the following table: 
$$
\begin{array}{|c|c|c|c|}
\hline 
 & f_{\DL(i)} f_i\pi_i &  f_d \pi_d & y_{m_r} 
\\
\hline 
\vep_d & 1 & 1 & 0 
\\
\hline 
\wt_d & -1 & -1 & 1 
\\
\hline 
\Sigma'_d & 0 &  -1 & 0 
\\
\hline 
\Upsilon_d & 1  & 0 & 0
\\
\hline
\end{array}
$$
This proves the result in this case. 

\subsection{} Suppose that $d$ is a vivid leaf of $D$ and $d = \ell _r$ for some $r = 1,\dots, t$. 
This means that $C_I(d)$ has cardinality $\geq 2$, that its largest element $m$ is 
some anchor point $m_r$, and that $L_r^* = \{d\}$. (Thus, if $d^-$ exists, it does not belong to 
$D$). Then $y_m = f_d f_m \pi_m$. 

Let $i_1 < \cdots < i_q$ be the others elements of $C_I(d)$. For $s= 1,\dots, q$, 
one has $y_{i_s} = f_{i_s} \pi_{i_s}$, hence $\vep_d(y_{i_s}) = 0$ while $\wt_d(y_{i_s})=1$. 
Therefore, one obtains the following table: 
$$
\begin{array}{|c|c|c|c|c|c|c|c|c|c|}
\hline 
 & f_d \pi_d &  f_{i_1} \pi_{i_1} & \cdots & f_{i_q} \pi_{i_q} & f_{d} f_m \pi_m
\\
\hline 
\vep_d & 1 &   0 & \cdots & 0 &  1  
\\
\hline 
\wt_d & -1 & 1 & \cdots & 1 & -1 
\\
\hline 
\Sigma'_d & 1{-}q & 2{-}q & \cdots  & 1 & 0
\\
\hline 
\Upsilon_d &  2{-}q & 2{-}q & \cdots  & 1 & 1 
\\
\hline
\end{array}
$$
Since $q\geq 1$, this proves the result in this case. 

\subsection{} Now, let $d\in D$ such that $\dep_D(d) \geq 1$ and $\vert S(d) \vert =2$. 
The hypothesis $\vert S(d) \vert = 2$ implies, firstly, that $y_d = f_d \pi_d$, \ie $d$ is not a receiving node.  
It follows that $C_I(d)$ contains no dead leaf; in particular $\sigma(d)$ either is a vivid leaf or satisfies 
$\dep_D \, \sigma(d) \geq 1$. In both cases, setting $m_r = \mu(d)$, one has $m_r > \sigma(d)$. 
Secondly, the hypothesis  $\vert S(d) \vert = 2$ implies that $d = \ell_r$. 

Finally, note that $y_{\sigma(d)}$ equals $f_{\DL(\sigma(d))} f_{\sigma(d)} \pi_{\sigma(d)}$ (where 
$\DL(\sigma(d))$ can be empty), so that $\wt_d(y_{\sigma(d)} ) = 1$. And, as mentioned earlier, 
 one has $\vep_d(y_{\sigma(d)}) = 0$, by \cite{BG25}, \Prop 2.3. Therefore, one obtains the following table: 
$$
\begin{array}{|c|c|c|c|c|c|c|c|c|c|}
\hline 
 & f_d \pi_d & \cdots & y_{\sigma(d)} &   \cdots & y_{m_r} 
\\
\hline 
\vep_d & 1 &  \cdots & 0 & \cdots & 1  
\\
\hline 
\wt_d & -1 & \cdots & 1 & \cdots & -1 
\\
\hline 
\Sigma'_d & \leq 0 & \cdots & 1 & \cdots & 0
\\
\hline 
\Upsilon_d &  \leq 1 & \cdots & 1 & \cdots & 1 
\\
\hline
\end{array}
$$
This proves the result in this case. (Note that the exact values of $\Sigma'_d(\pi_{D,I}, d)$ and 
$\Upsilon_d(\pi_{D,I}, d)$ are $1{-}q$ and $2{-}q$, where $q = c_I(d)$.) 
This completes the proof of \Prop \ref{prop-2} and, therefore, of \Th \ref{th-1}.

\let\om\omega

\section{Examples and concluding remarks} 

Let us conclude with some examples and remarks. Suppose for simplicity that $\Delta = I = A_n$, with its usual numbering. 

For integers $a\leq b$ in $[1,n]$, denote by $\alpha_{[a,b]}$ the root $\alpha_a + \cdots + \alpha_b$. 
Let $W = S_{n+1}$ be the Weyl group and, for $i=1,\dots, n$, let $s_i$ be the simple reflection $s_{\alpha_i}$, \ie the transposition $(i,i{+}1)$. If, for example, $w  = s_i s_j s_k s_\ell$ is a reduced expression, we will denote it simply by 
$s_{ijk\ell}$. 

\smallskip 
Let $\pi = \pi_\rho$ be the element $\pi_1\otimes \cdots \otimes \pi_n$ in the crystal 
$B_1 \otimes \cdots \otimes B_n$.

\subsection{} Suppose that $n=3$ and $D=I$. Set $\lambda_3 = \lambda_{I,I} = 2\rho - 2\alpha_{[1,3]} = 2\omega_2$. 
Taking the vertex $1$ as the root, our element $\pi_{I,I}(\calR_1)$ is 
\begin{equation} 
f_1 f_2 f_3^2 f_2 f_1 \pi = f_1 \pi_1 \otimes f_3 f_2 \pi_2 \otimes f_1 f_2 f_3 \pi_3 . 
\end{equation} 
Using the Stembridge relations (see \cite{St03}, \Def 1.1 and \Th 2.4, or \cite{BS17}, \S\S 4.2 and 5.7)), one sees that it equals 
$$
f_1 f_3 f_2^2 f_3 f_1 \pi = f_3 f_1 f_2^2 f_1 f_3 \pi =  f_3 f_2 f_1^2 f_2 f_3 \pi , 
$$
which is our element $\pi_{I,I}(\calR_3)$ obtained by taking the vertex $3$ as the root, so the latter does not produce a new element. 

\smallskip However, taking $2$ as the root, we obtain the element $\pi_{I,I}(\calR_2) = f_2 f_1^2 f_3^2 f_2 \pi$, 
which is different. Indeed, an easy computation shows that 
\begin{equation} 
f_2 f_1^2 f_3^2 f_2 \pi  = f_1 \pi_1 \otimes f_1 f_3 f_2 \pi_2 \otimes f_2 f_3 \pi_3 . 
\end{equation} 
Now, note that $\lambda_3 = s_1 s_2 \rho + s_3 s_2 \rho$, hence $L(\lambda_3)$ is a PRV-component of the 
tensor product (see \cite{Ku88}, (C$_1$) for the definition).  

\smallskip 
Using  a short Sage code, we verified  that $s_{2132}$ is the unique element $w$ of 
$W$ such that $\rho + w\rho$ is $W$-conjugate to $\lambda_3$. Thus, 
Kumar's lower bound (see \cite{Ku89}, \Th 1.2 ) for the multiplicity of $L(\lambda_3)$ in $L(\rho)\otimes L(\rho)$ is $1$. 
Therefore, our elements provide one additional copy of this PRV-component. (And a Sage computation  in the 
Weyl character ring shows that indeed $c^{\lambda_3}_{\rho,\rho} = 2$.)

\subsection{} Suppose that $n=4$ and $D=I$. Set $\lambda_4 = \lambda_{I,I} = 2\rho - 2\alpha_{[1,4]} = 2\omega_2 + 2\omega_3$. 
Taking the vertex $1$ as the root, our element $\pi_{I,I}(\calR_1)$ is 
\begin{equation} 
f_1 f_2 f_3 f_4^2 f_3 f_2 f_1 \pi = f_1 \pi_1 \otimes f_2 \pi_2 \otimes f_4 f_3 \pi_3 \otimes f_1 f_2 f_3 f_4 \pi_4 . 
\end{equation} 
Again, using the Stembridge relations, one sees that it equals 
\begin{multline*} 
f_1 f_2 f_4 f_3^2 f_4 f_2 f_1 \pi = f_4 f_1 f_2 f_3^2 f_2 f_1 f_4 \pi = 
f_4 f_1 f_3 f_2^2 f_3 f_1 f_4 \pi 
\\ 
= f_4 f_3 f_2 f_1^2 f_2 f_3 f_4 \pi, 
\end{multline*} 
where the latter is our element $\pi_{I,I}(\calR_4)$ obtained by taking the vertex $4$ as the root. 

\smallskip However, taking successively $2$ or $3$ as the root, an easy computation shows that 
\begin{gather} 
f_2 f_1 f_3 f_4^2 f_3 f_1 f_2 \pi  = f_1 \pi_1 \otimes f_1 f_2 \pi_2 \otimes f_4 f_3 \pi_3  \otimes f_2 f_3 f_4 \pi_4 ; 
\\
f_3 f_4 f_2 f_1^2 f_2 f_4 f_3\pi = f_1 \pi_1 \otimes f_2 \pi_2 \otimes f_1 f_2 f_4 f_3 \pi_3  \otimes f_3 f_4 \pi_4 .
\end{gather} 
So, we obtain $3$ different elements. 

\smallskip 
Now, note that $\lambda_4 = s_{421} \rho + s_{134} \rho$, hence $L(\lambda_4)$ is a PRV-component. 
Using again our Sage code, we verified that $s_{121}s_{434}$ and $s_{434} s_{121}$ 
are the unique elements $w$ of $W$ such that $\rho + w\rho$ is $W$-conjugate to $\lambda_4$. Thus, 
Kumar's lower bound for $c^{\lambda_4}_{\rho,\rho}$ is $2$ and 
our elements provide one additional copy of this PRV-component. 

On the other hand, 
a Sage Weyl character ring computation shows that $c^{\lambda_4}_{\rho,\rho} = 7$; therefore, 
it would be interesting to find other $\rho$-dominant elements of $B(\rho)$ of weight $\lambda_4 - \rho$.

\subsection{} Suppose that $n=4$ and $D = \{2,3\}$. Set 
\begin{equation} 
\mu_4 = \lambda_{D,I} = 2\rho -\alpha_{[1,4]} - \alpha_{[2,3]} = 2 \omega_1 + \omega_2 + \omega_3 + 2\omega_4.
\end{equation} 
Firstly, by \cite{BG25}, \Th 2.2, the elements 
$f_1 f_2^2 f_3^3 f_4 \pi$ and $f_4 f_3^2 f_2^2 f_1 \pi$ are $\rho$-dominant elements 
of $B(\rho)$. An easy computation shows that 
\begin{gather} 
f_1 f_2^2 f_3^2 f_4 \pi = \pi_1 \otimes \pi_2 \otimes f_2 f_3 \pi_3 \otimes f_1 f_2 f_3 f_4 \pi_4 , 
\\
f_4 f_3^2 f_2^2 f_1 \pi = f_2 f_1 \pi_1 \otimes f_3 f_2 \pi_2 \otimes f_3 \pi_3 \otimes f_4 \pi_4 , 
\end{gather}
hence these elements are distinct. Further, for the  four possible choices of the root, we 
obtain four elements $\pi_{D, I}(\calR_i)$. Taking 1 as the root, our Proposition \ref{prop-1} 
tells us that 
\begin{equation} 
f_2 f_3 f_4 f_3 f_2 f_1  \pi = f_1 \pi_1 \otimes f_3 f_2 \pi_2 \otimes f_2 f_3 \pi_3 \otimes f_4 \pi_4 . 
\end{equation}
Taking successively $2, 3$ and $4$ as the root, an easy computation shows that 
\begin{align} 
f_2 f_3 f_4 f_1 f_3 f_2 \pi &= \pi_1 \otimes f_1 f_3 f_2\pi_2 \otimes f_2 f_3 \pi_3 \otimes  f_4 \pi_4 ; 
\\
f_3 f_2 f_1 f_4 f_2 f_3 \pi &= \pi_1 \otimes f_2 \pi_2 \otimes f_1 f_2 f_3 \pi_3 \otimes f_3 f_4 \pi_4 ;
\\
f_3 f_2 f_1 f_2 f_3 f_4 \pi &= \pi_1 \otimes f_2 \pi_2 \otimes f_3 \pi_3 \otimes f_1 f_2  f_3 f_4 \pi_4 . 
\end{align}
So all six elements are distinct. Again, $L(\mu_4)$ is a PRV-component since one has, 
for example, 
$$
\mu_4 = s_2 s_1 \rho + s_3 s_4 \rho = s_2 s_1 (\rho + s_1 s_2 s_3 s_4 \rho). 
$$
Again, using our Sage code, we verified that $\rho + w\rho$ is $W$-conjugate to $\mu_4$ if and only if $w = 
s_{1234}, s_{4321}, s_{23214}$, or $s_{14232}$, hence Kumar's lower bound is $4$. 
Thus, our results combined with \cite{BG25}, \Th 2.2 provide two additional copies of this PRV-component. 

On the other hand, a Sage Weyl character ring computation shows that $c_{\rho,\rho}^{\mu_4} = 12$. 
Therefore it would be interesting to find other families of $\rho$-dominant elements of $B(\rho)$ of weight 
$\mu_4 - \rho$.

\end{document}